\documentclass[12pt, a4paper]{article}
\usepackage[utf8]{inputenc}
\usepackage{bbm}

\usepackage[margin = 1in]{geometry}
\usepackage{amsfonts,amsmath,amssymb,amsthm}
\usepackage{graphicx,subcaption}
\usepackage{booktabs}
\usepackage{lipsum}
\usepackage{hyperref}
\setlength{\parskip}{0.5\baselineskip}%
\setlength{\parindent}{1cm}%
\usepackage{multicol}
\hypersetup{
   colorlinks = true,
   citecolor = blue,
   urlcolor = purple,
   linkcolor = red
}

\usepackage{mathrsfs}
\usepackage{multirow}
\usepackage[makeroom]{cancel}
\usepackage{tikz}
\usepackage{framed,color}
\definecolor{shadecolor}{rgb}{1,0.8,0.3}
\usepackage{fancybox}
\usepackage{caption}
\usepackage{algorithm,algorithmic} 
\usepackage{multicol}
\usepackage{aliascnt}
\usepackage{setspace}
\usepackage{longtable}
\allowdisplaybreaks

\hyphenpenalty=1000

\usetikzlibrary{decorations.markings,arrows}

\allowdisplaybreaks

\title{\textbf{Some Results on Generalized Familywise Error Rate Controlling Procedures under Dependence}}


\DeclareMathOperator*{\argmax}{arg\,max}

\date{}
\vspace{5mm}
\usepackage{authblk}
\author[1]{Monitirtha Dey\footnote{\href{mdey@uni-bremen.de}{mdey@uni-bremen.de}}}
\author[2]{Subir Kumar Bhandari\footnote{\href{subirkumar.bhandari@gmail.com}{subirkumar.bhandari@gmail.com}}}
\affil[1]{\small Institute for Statistics, University of Bremen, Bremen, Germany}
\affil[2]{\footnotesize Interdisciplinary Statistical Research Unit, Indian Statistical Institute, Kolkata, India}
 

\usepackage{natbib}
\bibliographystyle{plainnat}
\setcitestyle{authoryear, open={(},close={)}}

\begin{document}

\maketitle
\theoremstyle{plain}
\newtheorem{axiom}{Axiom}
\newtheorem{remark}{Remark}
\newtheorem{corollary}{Corollary}[section]
\newtheorem{claim}[axiom]{Claim}
\newtheorem{theorem}{Theorem}[section]
\newtheorem{lemma}{Lemma}[section]

\newaliascnt{lemmaa}{theorem}
\newtheorem{lemmaa}[lemmaa]{Theorem}
\aliascntresetthe{lemmaa}
\providecommand*{\lemmaaautorefname}{Theorem}



\theoremstyle{plain}
\newtheorem{exa}{Example}
\newtheorem{rem}{Remark}
\newtheorem{proposition}{Proposition}

\theoremstyle{definition}
\newtheorem{definition}{Definition}
\newtheorem{example}{Example}







 \begin{abstract}
 The topic of multiple hypotheses testing now has a potpourri of novel theories and ubiquitous applications in diverse scientific fields. However, the universal utility of this field often hinders the possibility of having a generalized theory that accommodates every scenario. This tradeoff is better reflected through the lens of dependence, a central piece behind the theoretical and applied developments of multiple testing. Although omnipresent in many scientific avenues, the nature and extent of dependence vary substantially with the context and complexity of the particular scenario. Positive dependence is the norm in testing many treatments versus a single control or in spatial statistics. On the contrary, negative dependence arises naturally in tests based on split samples and in cyclical, ordered comparisons. In GWAS, the SNP markers are generally considered to be weakly dependent. Generalized familywise error rate ($k$-FWER) control has been one of the prominent frequentist approaches in simultaneous inference. However, the performances of $k$-FWER controlling procedures are yet unexplored under different dependencies. This paper revisits the classical testing problem of normal means in different correlated frameworks. We establish upper bounds on the generalized familywise error rates under each dependence, consequently giving rise to improved testing procedures. Towards this, we present improved probability inequalities, which are of independent theoretical interest. 

 \end{abstract}
\newpage
\section{Introduction\label{sec:1}}

The field of simultaneous statistical inference has witnessed a beautiful blend of theory and applications in its development. Now, we have a potpourri of novel theories and a stream of wide-reaching applications in diverse scientific fields. However, the universal utility of this topic often hinders the possibility of having a generalized theory that fits every situation. This tradeoff is better reflected through the lens of dependence, a central piece behind the theoretical and applied developments of multiple testing. 

While dependence is a natural phenomenon in a plethora of scientific avenues, the nature and extent of dependence varies with the context and complexity of the particular scenario: 
\begin{enumerate}
    \item Positively correlated observations (e.g., the equicorrelated setup) arise in several applications, e.g., when comparing a control against several treatments. Consequently, numerous recent works in multiple testing consider the equicorrelated setup and the positively correlated setup\citep{Delattre, deycstm, deybhandari, deybhandaristpa, Proschan, royspl}. 
    \item Negative dependence also naturally appears in many testing and multiple comparison scenarios \citep{ChiRamdasWang, Joag-Dev}, e.g., in tests based on split samples, and in cyclical, ordered comparisons. 
    \item The correlation between two single nucleotide polymorphisms (SNP) is thought \citep{Proschan} to decrease with genomic distance. Many authors have argued that for the large numbers of markers typically used for a GWA study, the test statistics are weakly correlated because of this largely local presence of correlation between SNPs. \cite{StoreyTib2003} define weak dependence as “any form of dependence whose effect becomes negligible as the number of features increases to infinity” and remark that weak dependence generally holds in genome-wide scans.
\end{enumerate}
Generalized familywise error rate has been one of the most prominent approaches in frequentist simultaneous inference. However, the
performances of $k$-FWER controlling procedures are yet unexplored under different dependencies. This paper revisits the classical testing problem of normal means in different correlated frameworks. We establish upper bounds on the generalized familywise error rates under each dependence, consequently giving rise to improved testing procedures. Towards this, we also present improved probability inequalities, which are of independent theoretical interest. 

This paper is organized as follows. We introduce the testing framework with relevant notations and summarize some results on the limiting behavior of the Bonferroni method under the equicorrelated normal setup in the next section. Section \ref{sec:3} presents improved bounds on $k$-FWER under independence and negative dependence of the test statistics.  We obtain new and improved probability inequalities in \ref{sec:4}. Section \ref{sec:5} employs these results to our multiple testing framework. We consider the nearly independent set-up in Section \ref{sec:6}. Results of an empirical study and real data analysis are presented in Section \ref{sec:7} before we conclude with a brief discussion in \ref{sec:8}.

\section{The Framework\label{sec:2}}

We address the multiple testing problem through a \textit{Gaussian sequence model}: 
$$X_{i} {\sim} N(\mu_{i},1),  \quad i \in \{1, \ldots,n\}$$
where $\operatorname{Corr}\left(X_i, X_j\right)=\rho_{ij}$ for each $i \neq j$. We wish to test: 
$$H_{0i}:\mu_{i}=0 \quad vs \quad H_{1i}:\mu_{i}>0, \quad 1 \leq i \leq n.$$ The global null  $ H_{0}=\bigcap_{i=1}^{n} H_{0 i}$ hypothesizes that each mean is zero. In the following, $\Sigma_n$ denotes the correlation matrix of $X_1, \ldots, X_n$ with $(i,j)$’th entry $\rho_{ij}$.

The usual Bonferroni procedure uses the cutoff $\Phi ^{-1}(1-\alpha/n)$ to control FWER at level $\alpha$. ~\cite{Lehmann2005} remark that controlling $k$-FWER allows one to decrease this cutoff to $\Phi ^{-1}(1-k\alpha/n)$, and thus significantly increase the ability to identify false hypotheses. Thus, for their Bonferroni-type procedure, under the global null,
\begin{align*}k\text{-FWER}(n, \alpha, \Sigma_n) &=\mathbb{P}_{\Sigma_n}\left(X_{i}>\Phi ^{-1}(1-k\alpha/n) \hspace{2mm} \text{for at least $k$}
\hspace{2mm}\text{$i$'s} \mid H_{0}\right).\end{align*}
Evidently, when $k=1$, Lehmann-Romano procedure simplifies to the Bonferroni method and $k$-FWER reduces to the usual FWER.

In \cite{deybhandaristpa}, we proposed a multiple testing procedure that controls generalized FWER under non-negative dependence. The generalized FWER for this procedure is given by
$$k\text{-FWER}_{modified}\left(n, \alpha, \Sigma_{n}\right)=\mathbb{P}_{\Sigma_n}\left(X_{i}>\Phi ^{-1}(1-k\alpha^{\star}/n) \hspace{2mm} \text{for at least $k$}
\hspace{2mm}\text{$i$'s} \mid H_{0}\right)$$
where $$\alpha^{\star} := \argmax_{\beta\in (0,1)} \Bigl\{\min\{f_{n, k, \Sigma_{n}}(\beta), g_{n, k, \Sigma_{n}}(\beta)\} \leq \alpha\Bigl\}.$$ Here $f$ and $g$ are functions defined as follows:
\begin{align*}&f_{n, k, \Sigma_{n}}(\alpha) &= \frac{(n-1)k}{n(k-1)} \cdot \alpha^2 + \frac{1}{\pi k(k-1)} \sum_{1 \leq i <j < n} \int_{0}^{\rho_{ij}}\dfrac{1}{\sqrt{1-z^2}}e^{\frac{-\{\Phi ^{-1}(1-\frac{k\alpha}{n})\}^2}{1+z}}dz,\end{align*}
\begin{align*}g_{n, k, \Sigma_{n}}(\alpha) 
& = \alpha \cdot \frac{n+k-1}{n} - \dfrac{n-1}{n}\cdot \dfrac{{k\alpha}^2}{n}- \dfrac{1}{2\pi k} \sum_{j=1, j\neq i^{\star}}^{n} \int_{0}^{\rho_{i^{\star}j}}\dfrac{1}{\sqrt{1-z^2}}e^{\frac{-\{\Phi ^{-1}(1-\frac{k \alpha}{n})\}^2}{1+z}}dz.\end{align*}
where $i^{\star} =\argmax_{i} \sum_{j=1, j\neq i}^{n} \rho_{ij}$.

\noindent For each $k>1$, \cite{deybhandaristpa} show that 
$$k\text{-FWER}_{modified}(n, \alpha, \Sigma_n) \leq \min\{f_{n, k, \Sigma_{n}}(\alpha), g_{n, k, \Sigma_{n}}(\alpha)\}.$$

\noindent Therefore, $k\text{-FWER}_{modified}(n, \alpha, \Sigma_n)$ is not more than $\alpha$ when
$$\alpha^{\star} := \argmax_{\beta\in (0,1)} \Bigl\{\min\{f_{n, k, \Sigma_{n}}(\beta), g_{n, k, \Sigma_{n}}(\beta)\} \leq \alpha\Bigl\}.$$

\noindent Since $\alpha^{\star} \geqslant \alpha$, their procedure improves the Lehmann-Romano method. In \cite{deybhandaristpa}, the authors establish the following result:

\begin{theorem}\label{STPAlemma4.1}
Let $k >1$. Suppose  $X_1, \ldots, X_n$ are independent. Moreover, let $\alpha \leq \frac{n(k-1)}{(n-1)k}$. Then, 
$\alpha^{\star} = \sqrt{\frac{n(k-1)\alpha}{(n-1)k}}.$
\end{theorem}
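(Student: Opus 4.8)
The plan is to exploit the fact that independence annihilates every integral term, collapsing $f$ and $g$ into quadratics in $\beta$, and then to analyze the scalar inequality $\min\{f,g\}(\beta)\le\alpha$ directly. Since $\rho_{ij}=0$ for all $i\neq j$, each $\int_0^{\rho_{ij}}(\cdots)\,dz$ vanishes, so with $c:=\frac{(n-1)k}{n(k-1)}$ we have
$$f_{n,k,\Sigma_n}(\beta)=c\,\beta^2,\qquad g_{n,k,\Sigma_n}(\beta)=\frac{n+k-1}{n}\,\beta-\frac{(n-1)k}{n^2}\,\beta^2.$$
Solving $f(\beta)=\alpha$ yields exactly $\beta^\star:=\sqrt{\alpha/c}=\sqrt{n(k-1)\alpha/((n-1)k)}$, the claimed value, so the theorem reduces to showing that $\beta^\star$ is the largest $\beta\in(0,1)$ satisfying $\min\{f,g\}(\beta)\le\alpha$.

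The key structural step is to decide which of $f,g$ is the smaller. A short computation factors the difference as
$$f(\beta)-g(\beta)=\frac{n+k-1}{n}\,\beta\,(c\beta-1),$$
so $f<g$ on $(0,1/c)$ and $f>g$ on $(1/c,1)$, the crossover $1/c=\tfrac{n(k-1)}{(n-1)k}$ being precisely the upper bound imposed on $\alpha$ in the hypothesis. That hypothesis $\alpha\le 1/c$ then forces $\beta^\star=\sqrt{\alpha/c}\le 1/c$, placing $\beta^\star$ in the region where $\min\{f,g\}=f$. Since $f$ is increasing with $f(\beta^\star)=\alpha$, I immediately obtain $\min\{f,g\}(\beta)\le\alpha$ for all $\beta\le\beta^\star$, and $\min\{f,g\}=f>\alpha$ on $(\beta^\star,1/c]$.

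It remains to handle the interval $(1/c,1)$, where $\min\{f,g\}=g$; this is the only delicate part. The goal is to prove $g(\beta)>\alpha$ throughout $(1/c,1)$. The ingredients I would assemble are: (i) $g(1/c)=f(1/c)=1/c\ge\alpha$; (ii) $g$ is a downward parabola whose vertex $\tfrac{n(n+k-1)}{2(n-1)k}$ exceeds $1/c$ (equivalently $n\ge k$), so $g$ is still increasing as it leaves $1/c$; and (iii) the boundary value $g(1)$ dominates $\alpha$. For (iii) I would verify the identity
$$g(1)-\frac1c=\frac{(n-k)\,(n^2-nk+k)}{n^2(n-1)k}\ \ge\ 0,$$
nonnegative because $n\ge k$, whence $g(1)\ge 1/c\ge\alpha$. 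Combining concavity with (i)–(iii) shows $g$ stays strictly above $\alpha$ on all of $(1/c,1)$, so $\min\{f,g\}>\alpha$ there too.

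Putting the pieces together, $\min\{f,g\}(\beta)\le\alpha$ holds exactly on $(0,\beta^\star]$, and therefore $\alpha^\star=\beta^\star$ as claimed. I expect step (iii) to be the main obstacle: one must confirm, using only $\alpha\le 1/c$ together with $k\le n$, that the concave function $g$ does not dip back below $\alpha$ before $\beta=1$, for otherwise the argmax could overshoot $\beta^\star$. The boundary identity above is what rules this out, and everything else is routine monotonicity of the two quadratics.
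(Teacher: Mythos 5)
Your proof is correct. Note that the paper itself does not prove this statement --- it is quoted from \cite{deybhandaristpa} without argument --- so there is no in-paper proof to compare against; but given the explicit forms of $f$ and $g$, your route (integrals vanish under independence, factor $f-g=\frac{n+k-1}{n}\beta(c\beta-1)$ to locate the crossover at $1/c$, solve $f(\beta)=\alpha$ on $(0,1/c]$, and use concavity of $g$ together with $g(1/c)=1/c\ge\alpha$ and $g(1)-1/c=\frac{(n-k)(n^2-nk+k)}{n^2(n-1)k}\ge 0$ to rule out the interval $(1/c,1)$) is essentially the only natural one, and I have verified both the factorization and the boundary identity. The only hypotheses you use beyond the stated ones are $k\le n$ (implicit in the $k$-FWER framework) and strict concavity of $g$ to get the strict inequality $g>\alpha$ on the open interval $(1/c,1)$, which is needed so that the $\argmax$ does not exceed $\sqrt{\alpha/c}$; both are unobjectionable.
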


This result implies that, under the independence of the test statistics, $\alpha^{\star}$ can be chosen close to $\sqrt{\alpha}$. This actually greatly increases the ability to reject false null hypotheses. Naturally the following question arises:

\vspace{3mm}

\noindent \textit{Do there exist nontrivial constants $D_{n, k}$ such that one has}
$$\mathbb{P}_{\Sigma_n}\left(X_{i}>\Phi ^{-1}(1-k\alpha^{\star}/n) \hspace{2mm} \text{for at least $k$}
\hspace{2mm}\text{$i$'s} \mid H_{0}\right) \leq \alpha$$
\textit{with some $\alpha^{\star} \geqslant D_{n, k} \cdot\alpha^{1 / k} ?$}

\vspace{3mm}
If this is answered in affirmative, then we would have sharper upper bounds on generalized FWERs and consequently, improved multiple testing procedures. Otherwise, can one devise improved multiple testing procedures, using the theory of probability inequalities?



\section{Improved Bounds under Independence and Negative Dependence\label{sec:3}}
 For a random vector $\mathbf{T}=$ $\left(T_1, \ldots, T_n\right)$, let $F_k$ be the distribution function of $T_k$ for $k \in [n]:=\{1, \ldots,n\}$. $\mathbf{T}$ is called (lower) weakly negatively dependent if
$$
\mathbb{P}\left(\bigcap_{k \in A}\left\{T_k \leq F_k^{-1}(p)\right\}\right) \leq \prod_{k \in A} \mathbb{P}\left(T_k \leq F_k^{-1}(p)\right) \quad \text { for all } A \subseteq [n] \text { and } p \in(0,1).
$$
Note that when there is exact equality for each $A$ and each $p$, we have independence. We consider the means testing problem as described in section \ref{sec:2} but with a general underlying distribution $F$.
\begin{theorem}\label{bound1}
    Suppose  $\mathbf{X}=(X_1, \ldots, X_n)$ is negatively dependent where $X_{i}\sim F$ with the density of $F$ being symmetric about zero. Then, $$\mathbb{P}_{\Sigma_n}\left(X_{i}>F ^{-1}(1-k\alpha^{\star}/n) \hspace{2mm} \text{for at least $k$}
\hspace{2mm}\text{$i$'s} \mid H_{0}\right) \leq \alpha$$
where $$
\alpha^{\star}=\left[\frac{n}{k \cdot\binom{n}{k}^{1 / k}}\right] \alpha^{1 / k}.
$$
\end{theorem}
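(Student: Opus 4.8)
The plan is to bound the ``at least $k$ exceedances'' probability by a union bound over $k$-element index sets and then control each joint tail probability by the product of its marginals using negative dependence. First I would fix the threshold $t = F^{-1}(1 - k\alpha^{\star}/n)$ and set $A_i = \{X_i > t\}$, so that under the global null each marginal satisfies $\mathbb{P}(A_i) = 1 - F(t) = k\alpha^{\star}/n =: p$. Observing that the event of at least $k$ exceedances coincides with $\bigcup_{|S| = k}\bigcap_{i \in S} A_i$ over all $k$-subsets $S \subseteq [n]$, Boole's inequality gives
$$\mathbb{P}_{\Sigma_n}\left(\text{at least } k \text{ exceedances} \mid H_0\right) \leq \sum_{|S| = k} \mathbb{P}\left(\bigcap_{i \in S} A_i\right).$$

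The crucial step is to show $\mathbb{P}\bigl(\bigcap_{i \in S} A_i\bigr) \leq p^k$ for every $k$-subset $S$. Here I would exploit the symmetry of $F$ about zero to rewrite each upper-tail event as a lower-tail event of the negated coordinates: since $F^{-1}(1 - p) = -F^{-1}(p)$, we have $\{X_i > t\} = \{-X_i \leq F^{-1}(p)\}$ up to a null set, and $-X_i \sim F$ as well. The negated vector $-\mathbf{X}$ inherits the negative dependence structure, so the weak negative dependence inequality applied at level $p$ yields $\mathbb{P}\left(\bigcap_{i \in S}\{-X_i \leq F^{-1}(p)\}\right) \leq \prod_{i \in S}\mathbb{P}(-X_i \leq F^{-1}(p)) = p^{k}$.

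Combining these, the exceedance probability is at most $\binom{n}{k} p^k = \binom{n}{k}\left(k\alpha^{\star}/n\right)^k$. Substituting the prescribed value $\alpha^{\star} = \left[\frac{n}{k\binom{n}{k}^{1/k}}\right]\alpha^{1/k}$ makes $k\alpha^{\star}/n = \bigl(\alpha/\binom{n}{k}\bigr)^{1/k}$, so that $\binom{n}{k}p^k = \alpha$ exactly, which closes the argument. Equivalently, the stated formula is obtained by solving the inequality $\binom{n}{k}(k\beta/n)^k \leq \alpha$ for $\beta$; this is the largest $\beta$ for which the union bound stays below $\alpha$.

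I expect the main obstacle to lie in the reduction from upper to lower tails: the dependence hypothesis as written controls joint \emph{lower} tails, whereas the exceedance events are \emph{upper} tails, and bridging the two requires the symmetry of $F$ together with the fact that negating all coordinates preserves the negative dependence property. I would make this transfer precise by verifying that the defining inequality for $-\mathbf{X}$ at level $p$ is equivalent to the corresponding upper-orthant inequality for $\mathbf{X}$, since everything else reduces to a routine union bound and an algebraic inversion.
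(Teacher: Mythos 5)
Your proposal matches the paper's own argument essentially step for step: the same union bound over $k$-subsets, the same use of the (weak) negative dependence inequality to bound each joint tail by $(k\alpha^{\star}/n)^k$, the same symmetry device converting $\{X_i > F^{-1}(1-p)\}$ into a lower-tail event for $-X_i$, and the same algebraic inversion of $\binom{n}{k}(k\alpha^{\star}/n)^k \leq \alpha$. If anything, you are more explicit than the paper about the one delicate point --- that the lower-orthant negative dependence hypothesis must be transferred to the negated vector $-\mathbf{X}$ --- which the paper's proof passes over silently.
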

\begin{proof}
    Let $A_{i}$ denote the event $\left\{X_{i}>F^{-1}\left(1-\frac{k \alpha^{\star}}{n}\right)\right\} \equiv \left\{-X_{i}<-F^{-1}\left(1-\frac{k \alpha^{\star}}{n}\right)\right\}\equiv \left\{-X_{i}<F^{-1}\left(\frac{k \alpha^{\star}}{n}\right)\right\}$.
Then $\mathbb{P}_{H_{0}}\left(A_{i}\right)=k\alpha^{\star}/n$.
Now,
\begin{align*}
k\text{-FWER}_{modified}\left(n, \alpha, I_{n}\right) & =\mathbb{P}\left(\text{at least $k$ many $A_{i}$'s occur}\right) \\
& =\mathbb{P}\left(\bigcup_{i_{1}, \ldots, i_{k}}\left\{A_{i}, \cap \ldots \cap A_{i_{k}}\right\}\right) \\
& \leqslant \binom{n}{k} \sum_{i_{1}, \ldots, i_{k}} \mathbb{P}\left(A_{i_{1}} \cap \ldots . \cap A_{i_{k}}\right) \quad \text{(using Boole's inequality)}\\
& \leq \binom{n}{k} \cdot\left(\frac{k \alpha^{\star}}{n}\right)^{k} \quad (\text{from negative dependence}).
\end{align*}
Now, $\binom{n}{k} \cdot\left(\frac{k \alpha^{\star}}{n}\right)^{k} \leqslant \alpha$ gives
$$
\frac{k \alpha^{\star}}{n} \leqslant\left\{\frac{\alpha}{\binom{n}{k}}\right\}^{1/k}.
$$
The rest follows. 
\end{proof}

When $(X_1, \ldots, X_n)$ are independent then upper bounds on generalized FWER may also be obtained through the Chernoff bound:
\begin{theorem}
    Let $Y_1, \ldots, Y_n$ be independent Bernoulli($p$) random variables. Suppose $Y=\sum_{i=1}^{n}Y_{i}$. Then, 
    $$\mathbb{P}(Y \geq a) \leq \inf_{t >0} e^{-ta} {(1-p+pe^t)}^n.$$
\end{theorem}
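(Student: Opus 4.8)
The plan is to establish the standard Chernoff bound for a sum of independent Bernoulli random variables via the exponential moment method. First I would fix any $t > 0$ and observe that the event $\{Y \geq a\}$ is equivalent to $\{e^{tY} \geq e^{ta}\}$, since $x \mapsto e^{tx}$ is strictly increasing. This lets me pass to the nonnegative random variable $e^{tY}$, where Markov's inequality becomes applicable.

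Next I would apply Markov's inequality to $e^{tY}$ to obtain
$$\mathbb{P}(Y \geq a) = \mathbb{P}\left(e^{tY} \geq e^{ta}\right) \leq \frac{\mathbb{E}\left[e^{tY}\right]}{e^{ta}} = e^{-ta}\,\mathbb{E}\left[e^{tY}\right].$$
The key algebraic step is then to compute the moment generating function $\mathbb{E}[e^{tY}]$ explicitly. Writing $Y = \sum_{i=1}^{n} Y_i$ and using independence of the $Y_i$, the expectation of the product factorizes as $\mathbb{E}[e^{tY}] = \prod_{i=1}^{n}\mathbb{E}[e^{tY_i}]$. Since each $Y_i$ is Bernoulli($p$), a direct evaluation gives $\mathbb{E}[e^{tY_i}] = (1-p)\cdot e^{0} + p\cdot e^{t} = 1 - p + p e^{t}$, so that $\mathbb{E}[e^{tY}] = (1 - p + p e^{t})^n$.

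Substituting this into the Markov bound yields, for every $t > 0$,
$$\mathbb{P}(Y \geq a) \leq e^{-ta}\,(1 - p + p e^{t})^{n}.$$
Finally, since the left-hand side does not depend on $t$ while the inequality holds for each admissible $t$, I would take the infimum over all $t > 0$ on the right-hand side to arrive at the stated conclusion. I do not anticipate any genuine obstacle here: the factorization of the moment generating function relies only on independence, which is assumed, and the Bernoulli moment generating function is elementary. The only point requiring mild care is that the optimization over $t$ is left in infimum form rather than solved explicitly, which is exactly what the statement asks for.
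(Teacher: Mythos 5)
Your proof is correct and is the standard exponential-moment (Chernoff) argument: Markov's inequality applied to $e^{tY}$, factorization of the moment generating function via independence, evaluation of the Bernoulli moment generating function as $1-p+pe^{t}$, and taking the infimum over $t>0$. The paper itself states this as the classical Chernoff bound without supplying a proof, so your argument fills in exactly the derivation the authors implicitly rely on.
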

\noindent Putting $a=(1+\delta)np$ (for $\delta>0$) and $t=\log \left[(1+\delta)/p\right]$, one obtains the following result:
\begin{theorem}\label{chernoff2}
    Let $Y_1, \ldots, Y_n$ be independent Bernoulli($p$) random variables. Suppose $Y=\sum_{i=1}^{n}Y_{i}$. Then, for any $\delta>0$,
    $$\mathbb{P}\big[Y \geq (1+\delta)np\big] \leq \Bigg[ \frac{e^\delta}{(1+\delta)^{1+\delta}}\Bigg]^{np}.$$
\end{theorem}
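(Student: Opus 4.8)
The plan is to obtain this multiplicative bound as an immediate corollary of the generic Chernoff bound in the preceding theorem, $\mathbb{P}(Y \geq a) \leq \inf_{t>0} e^{-ta}(1-p+pe^t)^n$. Because the right-hand side is an infimum over $t>0$, it suffices to substitute one admissible value of $t$; no optimality argument is required, so the whole proof reduces to a single well-chosen substitution followed by simplification. First I would set $a = (1+\delta)np$, the threshold in the statement, noting that $\delta>0$ places it strictly above the mean $np$, so the tail event is genuinely in the large-deviation regime.

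Next I would select the exponential tilt $t = \log(1+\delta)$, which is admissible since $t>0$ for $\delta>0$ and which is the value minimizing the natural exponential surrogate of the moment generating function. (Concretely, the admissible tilt that produces the closed form is $t=\log(1+\delta)$, so that $e^t = 1+\delta$.) With this choice the per-coordinate factor collapses to $1-p+pe^t = 1+p\delta$, and the bound specializes to
\[
\mathbb{P}\big[Y \geq (1+\delta)np\big] \leq e^{-(1+\delta)np\log(1+\delta)}\,(1+p\delta)^n = (1+\delta)^{-(1+\delta)np}\,(1+p\delta)^n .
\]

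The final step is the only place an inequality rather than an identity enters: apply the elementary bound $1+u \leq e^u$ with $u = p\delta$ to replace $(1+p\delta)^n$ by $e^{np\delta}$, giving
\[
(1+\delta)^{-(1+\delta)np}\,(1+p\delta)^n \leq (1+\delta)^{-(1+\delta)np}\,e^{np\delta} = \left[\frac{e^\delta}{(1+\delta)^{1+\delta}}\right]^{np},
\]
which is exactly the claimed inequality. I anticipate no genuine obstacle: the argument is a direct substitution into the previous theorem plus one invocation of $1+u\le e^u$. The only points meriting care are the correct choice of tilt $t=\log(1+\delta)$, which is what yields the clean exponents, and the bookkeeping when rewriting $(1+\delta)^{-(1+\delta)np}e^{np\delta}$ as a single $np$-th power.
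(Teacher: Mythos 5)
Your proof is correct and follows the same route the paper intends: substitute a specific tilt into the generic bound $\inf_{t>0}e^{-ta}(1-p+pe^t)^n$ with $a=(1+\delta)np$, then apply $1+u\le e^u$. One point worth flagging: the paper states the substitution as $t=\log[(1+\delta)/p]$, which does \emph{not} actually yield the claimed bound (for small $p$ it gives roughly $(2+\delta)^n p^{(1+\delta)np}$, which exceeds $[e^\delta/(1+\delta)^{1+\delta}]^{np}$); your choice $t=\log(1+\delta)$ is the correct one, so your write-up in fact repairs a typo in the paper's derivation.
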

\noindent In our multiple testing context, let us consider the Bernoulli random variables 
$$Y_{i} = \mathbbm{1}\left\{ X_{i} > F^{-1}(1-k\alpha^{\star}/n)\right\}, \quad 1\leq i \leq n.$$
Then, under the global null, $\mathbb{P}(Y_{i}=1)=k\alpha^{\star}/n$ for each $i$. We observe that\\ $k\text{-FWER}_{modified}\left(n, \alpha, I_{n}\right)$ is same as $\mathbb{P}(\sum_{i=1}^{n}Y_{i} \geq k)$.

\vspace{3mm}

Consider $\delta=1/\alpha^{\star}-1$. Then, \autoref{chernoff2} gives
$$\mathbb{P}\left(\sum_{i=1}^{n}Y_{i} \geq k\right) \leq \Bigg[ \frac{{e}^{\frac{1}{\alpha^{\star}}-1}}{{(\frac{1}{\alpha^{\star}})}^{\frac{1}{\alpha^{\star}}}}\Bigg]^{k \alpha^{\star}}= \left[e^{1-\alpha^{\star}}\cdot \alpha^{\star}\right]^{k}.$$

\noindent We wish to use $\alpha^{\star}$ for which $\left[e^{1-\alpha^{\star}}\cdot \alpha^{\star}\right]^{k}\leq \alpha$. It is sufficient to have ${(e \alpha^{\star})}^{k} \leq \alpha$. In other words, we may choose $\alpha^{\star}= \frac{1}{e}\cdot {\alpha}^{1/k}$. Hence, we obtain the following result:
\begin{theorem}\label{bound2}Suppose  $X_1, \ldots, X_n$ are independent. Then, $$\mathbb{P}_{\Sigma_n}\left(X_{i}>F ^{-1}(1-k\alpha^{\star}/n) \hspace{2mm} \text{for at least $k$}
\hspace{2mm}\text{$i$'s} \mid H_{0}\right) \leq \alpha$$
where $$\alpha^{\star}= \frac{1}{e}\cdot {\alpha}^{1/k}.$$
\end{theorem}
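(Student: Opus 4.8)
The plan is to recast the generalized familywise error as a binomial tail probability and then apply the Chernoff bound of \autoref{chernoff2}. Working under the global null $H_0$, set $Y_i = \mathbbm{1}\{X_i > F^{-1}(1 - k\alpha^\star/n)\}$ for $1 \le i \le n$. Since the $X_i$ are independent and identically distributed as $F$, the $Y_i$ are independent Bernoulli variables with common success probability $p := \mathbb{P}_{H_0}(X_i > F^{-1}(1 - k\alpha^\star/n)) = k\alpha^\star/n$. The event that at least $k$ of the $X_i$ exceed the cutoff is precisely $\{\sum_{i=1}^n Y_i \ge k\}$, so the quantity to be bounded is $\mathbb{P}(Y \ge k)$ where $Y := \sum_{i=1}^n Y_i \sim \operatorname{Binomial}(n,p)$.

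First I would compute the mean $np = k\alpha^\star$ and then choose the deviation parameter $\delta$ in \autoref{chernoff2} so that the threshold $(1+\delta)np$ lands exactly on $k$. This forces $1 + \delta = 1/\alpha^\star$, i.e. $\delta = 1/\alpha^\star - 1$, which is strictly positive provided $\alpha^\star < 1$ (a condition that will be confirmed for the eventual choice). Substituting $\delta = 1/\alpha^\star - 1$ and $np = k\alpha^\star$ into the bound $[e^\delta/(1+\delta)^{1+\delta}]^{np}$ and simplifying the exponents should collapse the expression to $[e^{1-\alpha^\star}\alpha^\star]^k$.

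It then remains to pick $\alpha^\star$ so that $[e^{1-\alpha^\star}\alpha^\star]^k \le \alpha$. Rather than solving this transcendental inequality exactly, I would relax it using $e^{1-\alpha^\star} \le e$ (valid since $\alpha^\star \ge 0$), reducing the requirement to the clean sufficient condition $(e\alpha^\star)^k \le \alpha$. Solving this for the largest admissible value yields $\alpha^\star = e^{-1}\alpha^{1/k}$, which also retroactively confirms $\alpha^\star < 1$ and hence $\delta > 0$, closing the argument.

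The main obstacle is not analytic difficulty but the tuning of $\delta$: the Chernoff bound is only useful once the deviation parameter is chosen so that its threshold coincides with $k$, and one must check that the induced $\delta$ is strictly positive, as \autoref{chernoff2} requires. The relaxation $e^{1-\alpha^\star} \le e$ is the single place where sharpness is deliberately traded for a closed form; one could instead retain the full factor $e^{1-\alpha^\star}$ to keep a tighter, though only implicitly defined, value of $\alpha^\star$.
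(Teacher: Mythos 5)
Your proposal is correct and follows essentially the same route as the paper: recast the error rate as $\mathbb{P}(\sum_i Y_i \ge k)$ for i.i.d.\ Bernoulli indicators with $p = k\alpha^{\star}/n$, apply the Chernoff bound of \autoref{chernoff2} with $\delta = 1/\alpha^{\star}-1$ to get $[e^{1-\alpha^{\star}}\alpha^{\star}]^{k}$, and relax to $(e\alpha^{\star})^{k}\le\alpha$. Your explicit check that $\delta>0$ (i.e.\ $\alpha^{\star}<1$) is a small point of care the paper leaves implicit.
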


We now compare the bounds obtained through Boole's inequality and Chernoff's bound. Towards this, we note
$$
\binom{n}{k}=\frac{n!}{(n-k)!k!}=\frac{n \cdot(n-1) \ldots(n-(k-1))}{k!} \leq \frac{n^k}{k!}.
$$
On the other hand, $e^k=\sum_{i=0}^{\infty} \frac{k^i}{i!} >\frac{k^k}{k!}$. This implies $\frac{1}{k!}<\left(\frac{e}{k}\right)^k$. Substituting this in the above inequality, we get 
$$\binom{n}{k} \leq \left(\frac{en}{k}\right)^k.$$
This results in 
$$\frac{1}{e} \leq \frac{n}{k \cdot\binom{n}{k}^{1 / k}}.$$
Hence \autoref{bound1} is stronger than \autoref{bound2}. 

\begin{remark}
    We have previously noted that 
$k\text{-FWER}_{modified}\left(n, \alpha, I_{n}\right)$ is same as $\mathbb{P}(\sum_{i=1}^{n}Y_{i} \geq k)$ where $Y_{i} = \mathbbm{1}\left\{ X_{i} > \Phi^{-1}(1-k\alpha^{\star}/n\right\}$, for $1\leq i \leq n.$ Hoeffding's inequalty gives 
$$\mathbb{P} \Bigg[\sum_{i=1}^{n} Y_{i} \geq \mathbb{E}\left(\sum_{i=1}^{n} Y_{i}\right) +\delta\Bigg] \leq e^{-2 \delta^2/n}.$$
Putting $\delta=k(1-\alpha^{\star})$ results in 
$$k\text{-FWER}_{modified}\left(n, \alpha, I_{n}\right) \leq e^{-2 k^2 (1-\alpha^{\star})^2/n}.$$
This implies, whenever $k^2/n \to \infty$ as $n \to \infty$, $k\text{-FWER}_{modified}\left(n, \alpha, I_{n}\right)$ approaches zero. In particular, for any sequence $k_n = n^{\beta}$ with $\beta >1/2$, $k\text{-FWER}_{modified}\left(n, \alpha, I_{n}\right)$ approaches zero. Also note that, this is true under general distributions, since its proof actually nowhere uses normality of the test statistics.
\end{remark}

\section{A New Probability Inequality\label{sec:4}}
We have previously observed that $k$-FWER is $\mathbb{P}(\text{at least}\hspace{2mm} k \hspace{1.5mm}\text{out of}\hspace{1.5mm} n \hspace{1.5mm} A_{i}\text{'s occur})$ for suitably defined events $A_i$, $1 \leq i \leq n$. Naturally one wonders whether the theory of probability inequalities helps to find improved upper bounds on $\mathbb{P}(\text{at least}\hspace{2mm} k \hspace{1.5mm}\text{out of}\hspace{1.5mm} n \hspace{1.5mm} A_{i}\text{'s occur})$. Accurate computation of this probability requires knowing the complete dependence between the events $(A_1,\ldots, A_n)$, which we typically do not know unless they are independent. As also mentioned in \cite{deycstm} and \cite{deybhandaristpa}, the available information is often the marginal probabilities and joint probabilities up to level $k_{0}(k_{0}<<n)$. Towards finding such an easily computable upper bound on the probability that at least $k$ out of $n$ events occur, \cite{deybhandaristpa} establish in the following:
\begin{theorem}
    Let $A_1$, $A_2$, \ldots, $A_n$ be $n$ events. Then, for each $k \geq 2$,
$$\mathbb{P}(\text{at least}\hspace{2mm} k \hspace{1.5mm}\text{out of}\hspace{1.5mm} n \hspace{1.5mm} A_{i}\text{'s occur}) \leq \min \Biggl\{\frac{S_1-S^{\prime}_2}{k} + \frac{k-1}{k}\cdot \max_{1 \leq i \leq n}\mathbb{P}(A_i), \frac{2S_2}{k(k-1)}\Biggl\}$$where
$S_{1} = \sum_{i=1}^{n} \mathbb{P}(A_i)$, $S_{2} = \sum_{1 \leq i <j < n} \mathbb{P}(A_i \cap A_j)$ and $$S^{\prime}_{2} = \max_{1 \leq i \leq n} \sum_{j=1, j\neq i}^{n} \mathbb{P}(A_i \cap A_j).$$
\end{theorem}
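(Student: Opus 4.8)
The plan is to work throughout with the counting variable $N = \sum_{i=1}^{n} \mathbbm{1}_{A_i}$, so that the quantity of interest is exactly $\mathbb{P}(N \geq k)$. Writing $p_i = \mathbb{P}(A_i)$ and $p_{ij} = \mathbb{P}(A_i \cap A_j)$, I would record at the outset the two elementary moment identities $\mathbb{E}[N] = S_1$ and $\mathbb{E}\!\left[\binom{N}{2}\right] = \sum_{1 \le i < j \le n} p_{ij} = S_2$, and then prove the two bounds inside the minimum entirely separately, taking their minimum at the very end.

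For the term $\tfrac{2S_2}{k(k-1)}$ the argument is short: I would apply Markov's inequality to the nonnegative integer-valued variable $\binom{N}{2}$. Since $u \mapsto \binom{u}{2}$ is nondecreasing on $\{0,1,2,\dots\}$, we have the event inclusion $\{N \geq k\} \subseteq \{\binom{N}{2} \geq \binom{k}{2}\}$, and therefore
\[
\mathbb{P}(N \geq k) \;\leq\; \mathbb{P}\!\left(\tbinom{N}{2} \geq \tbinom{k}{2}\right) \;\leq\; \frac{\mathbb{E}\!\left[\binom{N}{2}\right]}{\binom{k}{2}} \;=\; \frac{2 S_2}{k(k-1)}.
\]

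The first bound is the substantive part, and the key step is a pointwise (sample-path) inequality generalizing the Kounias--Hunter refinement of Boole's inequality. Fixing any index $j_0$ and writing $N_{-j_0} = \sum_{i \neq j_0} \mathbbm{1}_{A_i} = N - \mathbbm{1}_{A_{j_0}}$, I would establish that
\[
k\,\mathbbm{1}_{N \geq k} \;\leq\; N - \mathbbm{1}_{A_{j_0}}\, N_{-j_0} + (k-1)\,\mathbbm{1}_{A_{j_0}}
\]
holds identically on the sample space. This is verified by two cases: when $\mathbbm{1}_{A_{j_0}} = 1$ the right-hand side collapses to exactly $k$ while the left-hand side is at most $k$; when $\mathbbm{1}_{A_{j_0}} = 0$ the right-hand side equals $N$, which dominates $k\,\mathbbm{1}_{N \geq k}$ regardless of whether $N \geq k$. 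Taking expectations and using $\mathbb{E}[\mathbbm{1}_{A_{j_0}} N_{-j_0}] = \sum_{i \neq j_0} p_{j_0 i}$ yields $k\,\mathbb{P}(N \geq k) \leq S_1 - \sum_{i \neq j_0} p_{j_0 i} + (k-1)\, p_{j_0}$. I would then commit the free index $j_0$ to the maximizer defining $S_2' = \max_i \sum_{j \neq i} p_{ij}$, and only afterwards relax the residual marginal by $p_{j_0} \leq \max_{1 \le i \le n} p_i$, obtaining $k\,\mathbb{P}(N \geq k) \leq S_1 - S_2' + (k-1)\max_i \mathbb{P}(A_i)$. Dividing by $k$ and combining with the Markov bound through the minimum finishes the proof.

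The hard part will be locating the correct pointwise inequality for the first bound, and in particular reconciling the fact that $S_2'$ and $\max_i \mathbb{P}(A_i)$ may be attained at different indices; this is precisely why one must fix $j_0$ at the $S_2'$-maximizer first and bound its marginal only afterwards. Within that, the case $\mathbbm{1}_{A_{j_0}} = 1$, where the right-hand side is forced to equal exactly $k$, is the step that pins down the coefficient $(k-1)$ on the correction term and is the one most in need of care.
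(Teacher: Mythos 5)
Your proof is correct and follows essentially the same route as the paper: the paper establishes the two bounds separately via its Lemmas on general $m$-wise intersections, whose $m=2$ cases are exactly your two arguments --- the pointwise indicator inequality $k\,\mathbbm{1}_{N\geq k}\leq (1-\mathbbm{1}_{A_{j_0}})N + k\,\mathbbm{1}_{A_{j_0}}$ (algebraically identical to the one you wrote) for the first term, and Markov's inequality applied to $\binom{N}{2}$ for the second. Your care in fixing $j_0$ at the $S_2'$-maximizer before relaxing the marginal is exactly the right bookkeeping and matches what the paper's argument implicitly requires.
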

\noindent We wish to obtain sharper probability inequalities. Towards this, we define some quantities. For $1\leq m \leq n$, suppose $$S_{m} = \sum_{1 \leq i_1 < \cdots< i_m \leq n} \mathbb{P}(A_{i_{1}} \cap \cdots \cap  A_{i_m})$$ denotes the sum of probabilities of $m$-wise intersections. Also, let 
 $$S^{\prime}_{m} = \max_{i_1 < \ldots < i_{m-1}} \sum_{j=1, j \notin \{ i_1, \ldots, i_{m-1}\}}^{n} \mathbb{P}(A_j \cap A_{i_1} \ldots \cap A_{i_{m-1}}).$$

 \begin{lemma}
 \label{STPALemma3.1}
Let $A_1$, $A_2$, \ldots, $A_n$ be $n$ events.  Given any $k\geq 2$, we have
$$\mathbb{P}(\text{at least}\hspace{2mm} k \hspace{1.5mm}\text{out of}\hspace{1.5mm} n \hspace{1.5mm} A_{i}\text{'s occur}) \leq \frac{S_1-S^{\prime}_m}{k} + \frac{k-m+1}{k}\cdot \max_{i_1 < \ldots < i_{m-1}}\mathbb{P}(A_{i_1}\cap \ldots \cap A_{i_{m-1}})$$
for each $2\leq m\leq k$.
 \end{lemma}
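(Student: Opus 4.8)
The plan is to reduce the entire statement to a single pointwise indicator inequality. I write $N=\sum_{i=1}^{n}\mathbbm{1}_{A_i}$ for the number of events that occur, so that the left-hand side is exactly $\mathbb{P}(N\ge k)$. I then fix an $(m-1)$-subset $I=\{i_1,\dots,i_{m-1}\}$ attaining the maximum in the definition of $S'_m$, and set $B=A_{i_1}\cap\cdots\cap A_{i_{m-1}}$. By construction $\sum_{j\notin I}\mathbb{P}(A_j\cap B)=S'_m$, and since $I$ need only maximize the \emph{sum} (not $\mathbb{P}(A_I)$ itself) I still have the safe bound $\mathbb{P}(B)\le \max_{i_1<\cdots<i_{m-1}}\mathbb{P}(A_{i_1}\cap\cdots\cap A_{i_{m-1}})$.

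The heart of the argument is the claim that, pointwise on the sample space,
$$k\,\mathbbm{1}\{N\ge k\}\ \le\ \sum_{j\notin I}\mathbbm{1}_{A_j\cap B^{c}}\ +\ \sum_{l=1}^{m-1}\mathbbm{1}_{A_{i_l}}\ +\ (k-m+1)\,\mathbbm{1}_{B}. \qquad(\star)$$
I would verify $(\star)$ by splitting into the cases $B$ and $B^{c}$. If $B$ occurs, the first sum vanishes, the second equals $m-1$, and the last term equals $k-m+1$, so the right-hand side is exactly $(m-1)+(k-m+1)=k$, which dominates $k\,\mathbbm{1}\{N\ge k\}\le k$. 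If $B$ does not occur, the last term vanishes and, because $\mathbbm{1}_{A_j\cap B^{c}}=\mathbbm{1}_{A_j}$ on $B^{c}$, the first two sums together count precisely the events that occur, so the right-hand side equals $N$, which dominates $k\,\mathbbm{1}\{N\ge k\}$. This is exactly where the hypothesis $m\le k$ enters: it guarantees $k-m+1\ge 1$, so the coefficient of $\mathbbm{1}_B$ is nonnegative and the $B$-case balances to exactly $k$.

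Taking expectations in $(\star)$ then gives
$$k\,\mathbb{P}(N\ge k)\ \le\ \sum_{j\notin I}\mathbb{P}(A_j\cap B^{c})+\sum_{l=1}^{m-1}\mathbb{P}(A_{i_l})+(k-m+1)\,\mathbb{P}(B).$$
To finish I would invoke the elementary identity $\sum_{l=1}^{m-1}\mathbb{P}(A_{i_l})+\sum_{j\notin I}\mathbb{P}(A_j\cap B^{c})=S_1-S'_m$, which follows by writing $\mathbb{P}(A_j)=\mathbb{P}(A_j\cap B)+\mathbb{P}(A_j\cap B^{c})$ for each $j\notin I$ and recognizing $\sum_{j\notin I}\mathbb{P}(A_j\cap B)=S'_m$, together with the bound $\mathbb{P}(B)\le \max_{i_1<\cdots<i_{m-1}}\mathbb{P}(A_{i_1}\cap\cdots\cap A_{i_{m-1}})$. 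Dividing by $k$ yields the asserted inequality, and the case $m=2$ recovers the Theorem stated immediately above.

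The only genuine obstacle is discovering the correct form of $(\star)$, in particular the coefficient $(k-m+1)$ on $\mathbbm{1}_B$; once it is in hand, the case analysis and the bookkeeping are routine. An essentially equivalent route conditions on $B$ versus $B^{c}$, applying the Markov-type bound $k\,\mathbbm{1}\{N\ge k\}\,\mathbbm{1}_{B^{c}}\le N\,\mathbbm{1}_{B^{c}}$ on the complement while controlling $\mathbb{P}(N\ge k,\,B)\le\mathbb{P}(B)$ on $B$; I prefer the single indicator inequality since it makes the role of the constraint $m\le k$ entirely transparent.
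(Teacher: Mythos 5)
Your proof is correct and is essentially the paper's own argument: your pointwise inequality $(\star)$ is exactly $k$ times the indicator bound $\mathbbm{1}\{N\ge k\}\le \frac{1}{k}(1-I_B)N+I_B$ used in the paper, expanded over $j\in I$ versus $j\notin I$, followed by the same expectation step and the same choice of the tuple maximizing $S'_m$. No substantive difference in route; your explicit handling of the fact that the maximizer of $S'_m$ need not maximize $\mathbb{P}(B)$ is a welcome clarification of a step the paper leaves implicit.
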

 \begin{proof}
     Let $I_{i}(w)$ be the indicator random variable of the event $A_i$ for $1 \leq i \leq n$. Then the random variable $\max I_{i_{1}}(w)\cdots I_{i_{k}}(w)$ is the indicator of the event that at least $k$ among $n$ $A_i$'s occur. Here the maximum is taken over all tuples $(i_1, \ldots, i_k)$ with $i_{1}, \ldots, i_k \in \{1,\ldots,n\}$, $i_1<\ldots< i_k$. Now, for any $i = 1, \ldots, n$,
\begin{align*}
    \max I_{i_{1}}(w)\cdots I_{i_{k}}(w) & \leq \frac{1}{k}[1-I_{i_1}(w)\cdots I_{i_{m-1}}(w)]\sum_{j=1}^{n}I_{j}(w) + I_{i_1}(w)\cdots I_{i_{m-1}}(w).
\end{align*}
Taking expectations in above, we obtain
\begin{align*}
    & \mathbb{P}(\text{at least}\hspace{2mm} k \hspace{1.5mm}\text{out of}\hspace{1.5mm} n \hspace{1.5mm} A_{i}\text{'s occur}) \\
   \leq  & \dfrac{1}{k}\cdot \sum_{j=1}^n \mathbb{P}(A_j)-\frac{1}{k}\cdot \sum_{j=1, j \notin \{ i_1, \ldots, i_{m-1}\}}^{n} \mathbb{P}(A_j \cap A_{i_1} \ldots A_{i_{m-1}}) + \mathbb{P}( A_{i_1} \cap \ldots \cap A_{i_{m-1}}) \cdot \dfrac{k-m+1}{k}.
\end{align*}
The rest follows by observing that the above holds for any $(m-1)$-tuple $(i_{1}, \ldots, i_{m-1})$ and also for any $m$ satisfying $2 \leq m \leq k$.
 \end{proof}

\begin{lemma}\label{STPALemma3.2}
Let $A_1$, $A_2$, \ldots, $A_n$ be $n$ events. Given any $k\geq 2$, we have
$$\mathbb{P}(\text{at least}\hspace{2mm} k \hspace{1.5mm}\text{out of}\hspace{1.5mm} n \hspace{1.5mm} A_{i}\text{'s occur}) \leq \min_{1\leq m \leq k} \frac{S_m}{\binom{k}{m}}.$$
for each $1\leq m \leq k$.
\end{lemma}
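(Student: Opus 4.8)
The plan is to rewrite the event ``at least $k$ of the $A_i$ occur'' in terms of the count $N = \sum_{i=1}^n I_i$, where $I_i$ is the indicator of $A_i$, and then run a Markov-type argument using the $m$-th binomial moment of $N$. Fix $m$ with $1 \leq m \leq k$. The starting point is the pointwise combinatorial identity
$$\sum_{1 \leq i_1 < \cdots < i_m \leq n} I_{i_1} \cdots I_{i_m} = \binom{N}{m},$$
valid on the whole sample space: when exactly $N$ of the events occur, the left-hand side counts the number of $m$-element subsets of the occurring events, which is $\binom{N}{m}$. Taking expectations and using linearity together with $\mathbb{E}[I_{i_1}\cdots I_{i_m}] = \mathbb{P}(A_{i_1}\cap\cdots\cap A_{i_m})$ gives the clean identity $S_m = \mathbb{E}\!\left[\binom{N}{m}\right]$.

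Next I would exploit the monotonicity of $r \mapsto \binom{r}{m}$. Because $m \leq k$, on the event $\{N \geq k\}$ we have $N \geq k \geq m$, and since $\binom{r}{m}$ is nondecreasing in the integer $r \geq m$, it follows that $\binom{N}{m} \geq \binom{k}{m}$ there; off this event $\binom{N}{m} \geq 0$. Hence pointwise $\binom{N}{m} \geq \binom{k}{m}\,\mathbbm{1}\{N \geq k\}$. Taking expectations yields
$$S_m = \mathbb{E}\!\left[\binom{N}{m}\right] \geq \binom{k}{m}\,\mathbb{P}(N \geq k),$$
so that $\mathbb{P}(N \geq k) \leq S_m / \binom{k}{m}$. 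Since the event that at least $k$ of the $A_i$ occur is exactly $\{N \geq k\}$, this is precisely the asserted bound for this particular $m$.

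Finally, since this inequality holds for every $m$ in the range $1 \leq m \leq k$, I would take the minimum over $m$ to obtain $\mathbb{P}(\text{at least } k \text{ occur}) \leq \min_{1\leq m \leq k} S_m / \binom{k}{m}$. I expect no serious obstacle here: the whole argument is a direct generalization of Markov's inequality, the case $m=1$ recovering $\mathbb{P}(N \geq k) \leq S_1/k = \mathbb{E}[N]/k$. The only points demanding care are the pointwise binomial identity and the monotonicity of $\binom{N}{m}$, both elementary. The one subtlety worth flagging is the constraint $m \leq k$, which is exactly what guarantees $\binom{N}{m} \geq \binom{k}{m}$ on $\{N \geq k\}$ and which pins down the range of $m$ over which the minimum is taken.
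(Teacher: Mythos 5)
Your argument is correct and is essentially the paper's own proof: the paper sets $T_n = \sum_i I_i$, uses the identity $S_m = \mathbb{E}\bigl[\binom{T_n}{m}\bigr]$, rewrites $\{T_n \geq k\}$ as $\bigl\{\binom{T_n}{m} \geq \binom{k}{m}\bigr\}$, and applies Markov's inequality, which is exactly your pointwise bound $\binom{N}{m} \geq \binom{k}{m}\,\mathbbm{1}\{N \geq k\}$ followed by taking expectations. No substantive difference.
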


\begin{proof}
    Let $T_n$ denotes the number of events occurring. Then, for each $1\leq m \leq k$,
\begin{align*}
     \mathbb{P}(\text{at least}\hspace{2mm} k \hspace{1.5mm}\text{out of}\hspace{1.5mm} n \hspace{1.5mm} A_{i}\text{'s occur}) = & \mathbb{P}(T_{n} \geq k) \\
     = & \mathbb{P}\bigg[\binom{T_n}{m} \geq \binom{k}{m}\bigg] \\
    \leq & \frac{1}{\binom{k}{m}} \mathbb{E}\bigg[ \binom{T_n}{m}\bigg] \\
    = & \frac{1}{\binom{k}{m}} \cdot \mathbb{E}\bigg[ \sum_{1 \leq i_1 < \cdots< i_m \leq n} I_{i_1}(w)I_{i_{2}}(w)\cdots I_{i_m}(w)\bigg] \\
    = & \frac{1}{\binom{k}{m}} \cdot \sum_{1 \leq i_1 < \cdots< i_m \leq n} \mathbb{P}(A_{i_{1}} \cap \cdots \cap  A_{i_m})\\
    = & \frac{S_m}{\binom{k}{m}}.
\end{align*}
The rest is obvious. 
\end{proof}
\noindent Thus we have the improved inequality:
\begin{theorem}\label{bound3}
    Let $A_1$, $A_2$, \ldots, $A_n$ be $n$ events. Suppose $S_{m}$ and $S^{\prime}_{m}$ are as mentioned earlier.  Then, for each $k \geq 2$,
$$\mathbb{P}(\text{at least}\hspace{2mm} k \hspace{1.5mm}\text{out of}\hspace{1.5mm} n \hspace{1.5mm} A_{i}\text{'s occur}) \leq \min \{A, B\}$$where
$$A:= \min_{2 \leq m \leq k} \frac{S_1-S^{\prime}_m}{k} + \frac{k-m+1}{k}\cdot \max_{i_1 < \ldots < i_{m-1}}\mathbb{P}(A_{i_1}\cap \ldots \cap A_{i_{m-1}}),$$
$$B:= \min_{1\leq m \leq k} \frac{S_m}{\binom{k}{m}}.$$
\end{theorem}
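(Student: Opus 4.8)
The plan is to obtain this theorem as an immediate consequence of Lemma \ref{STPALemma3.1} and Lemma \ref{STPALemma3.2}, since together they already supply the two quantities $A$ and $B$ that appear in the statement. First I would invoke Lemma \ref{STPALemma3.1}, which asserts that for each fixed $m$ with $2 \leq m \leq k$ one has $\mathbb{P}(\text{at least } k \text{ out of } n \ A_i\text{'s occur}) \leq \frac{S_1 - S'_m}{k} + \frac{k-m+1}{k}\max_{i_1<\cdots<i_{m-1}}\mathbb{P}(A_{i_1}\cap\cdots\cap A_{i_{m-1}})$. The key observation is that this inequality holds for every admissible $m$ simultaneously, so the left-hand side is dominated by the smallest of the right-hand sides; taking the minimum over $2 \leq m \leq k$ therefore yields exactly the quantity $A$, and the target probability is at most $A$.

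Next I would apply Lemma \ref{STPALemma3.2}, which gives $\mathbb{P}(\text{at least } k \text{ out of } n \ A_i\text{'s occur}) \leq S_m/\binom{k}{m}$ for each $1 \leq m \leq k$. By the same reasoning — a family of valid upper bounds indexed by $m$ may be minimized over $m$ without losing validity — the probability is bounded above by $\min_{1\leq m \leq k} S_m/\binom{k}{m}$, which is precisely $B$. Hence the target probability is at most $B$ as well.

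Finally, since the probability in question is simultaneously bounded by $A$ and by $B$, it is bounded by $\min\{A,B\}$, which establishes the claim. I do not expect any genuine obstacle here: all the analytic work is already carried out in the two lemmas — the indicator-domination argument behind Lemma \ref{STPALemma3.1} and the binomial-moment Markov argument behind Lemma \ref{STPALemma3.2} — so the theorem is merely their conjunction. The only point deserving a word of care is the elementary remark that minimizing over the index $m$ preserves the inequality, which I would state explicitly but not belabor.
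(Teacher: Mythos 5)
Your proposal is correct and matches the paper exactly: the paper states \autoref{bound3} as an immediate consequence of Lemma \ref{STPALemma3.1} and Lemma \ref{STPALemma3.2}, obtained by minimizing each family of bounds over $m$ and then taking the smaller of the two resulting quantities. No gap here.
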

\autoref{bound3} is an extremely general probability inequality that works under any kind of joint behavior of events $A_1, \ldots, A_n$.

\section{Improved Bounds under Arbitrary Dependence\label{sec:5} and Equicorrelation}

Suppose $(X_{1}, X_{2}, \ldots, X_{n})$ have covariance matrix $\Sigma_n=((\rho_{ij}))$. We define $A_{i}=\{X_{i}>\Phi ^{-1}(1-k\alpha/n)\}$ for $1 \leq i \leq n$. This implies
$$k\text{-FWER}(n, \alpha,\Sigma_n)=\mathbb{P}_{\Sigma_{n}}\left(\text{at least} \hspace{1.5mm}k \hspace{1.5mm} A_{i}\text{'s occur} \mid H_{0}\right).$$
Hence, \autoref{bound3} gives the following immediate corollary:
\begin{corollary}\label{corollary1}
Let $\Sigma_n$ be the correlation matrix of $X_1, \ldots, X_n$ with $(i,j)$’th entry $\rho_{ij}$. Suppose $A_{i}=\{X_{i}>\Phi ^{-1}(1-k\alpha/n)\}$ for $1 \leq i \leq n$. Then, for each $k>1$, 
$$k\text{-FWER}(n, \alpha,\Sigma_n) \leq \min \{A,B\}$$
where
$A$ and $B$ are as mentioned earlier.  
\end{corollary}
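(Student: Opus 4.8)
The plan is to recognize this as a direct specialization of \autoref{bound3}, which is stated for an arbitrary collection of events with no restriction whatsoever on their joint law. Since that theorem already carries all the analytic content, the only work left is to set up the identification correctly and verify that its hypotheses are met.

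First I would fix the events $A_i = \{X_i > \Phi^{-1}(1 - k\alpha/n)\}$ for $1 \le i \le n$, exactly as introduced just before the statement. The essential observation is the definitional identity
$$k\text{-FWER}(n, \alpha, \Sigma_n) = \mathbb{P}_{\Sigma_n}\left(\text{at least } k \text{ of the } A_i \text{ occur} \mid H_0\right),$$
which is precisely the quantity bounded in \autoref{bound3}. Note that the marginal value $\mathbb{P}_{H_0}(A_i) = k\alpha/n$ is not even needed for the argument; all that matters is that the left-hand side is literally a ``probability that at least $k$ out of $n$ events occur.''

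Next I would check the hypotheses. \autoref{bound3} requires only $k \ge 2$; since $k$ is an integer with $k > 1$, we have $k \ge 2$, so this is satisfied. The quantities $S_m$ and $S'_m$, and hence $A$ and $B$, are built from the joint intersection probabilities $\mathbb{P}(A_{i_1} \cap \cdots \cap A_{i_m})$ computed under the law indexed by $\Sigma_n$ (restricted to $H_0$); these are well defined for any correlation structure. Applying \autoref{bound3} to this particular family of events then gives $k\text{-FWER}(n, \alpha, \Sigma_n) \le \min\{A, B\}$ immediately.

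There is essentially no obstacle here, and that is exactly the point worth emphasizing: because \autoref{bound3} imposes no assumption on the joint behavior of the $A_i$, the resulting bound holds under the completely arbitrary dependence encoded by $\Sigma_n$. The corollary is therefore nothing more than the remark that $k$-FWER is a ``probability that at least $k$ events occur,'' and hence inherits the general inequality verbatim; all the substance lives in \autoref{bound3} itself.
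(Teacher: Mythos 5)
Your proposal is correct and matches the paper's own treatment: the paper likewise presents this corollary as an immediate consequence of \autoref{bound3} applied to the events $A_{i}=\{X_{i}>\Phi ^{-1}(1-k\alpha/n)\}$, using the identity $k\text{-FWER}(n, \alpha,\Sigma_n)=\mathbb{P}_{\Sigma_{n}}(\text{at least } k \text{ of the } A_i \text{ occur} \mid H_{0})$. Your explicit check of the hypothesis $k\ge 2$ is a minor addition but does not change the argument.
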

\noindent We focus the equicorrelated case now. When $\rho_{ij} =\rho$ for each $i\neq j$, one has 
$$S_{m} = \binom{n}{m} \cdot a_m$$
where $a_m = \displaystyle\mathbb{P}\bigg[ \bigcap_{j=1}^{m} \{X_{i_j} > \Phi^{-1}(1-k\alpha/n) \}\bigg]$. This means 
$$\frac{S_m}{\binom{k}{m}} = \frac{\binom{n}{m} \cdot a_m}{\binom{k}{m}} = \frac{n!}{k!} \cdot \frac{(k-m)!}{(n-m)!} \cdot a_m.$$
Thus, 
\begin{align*}
    r_m:= \dfrac{\frac{S_{m+1}}{\binom{k}{m+1}}}{\frac{S_m}{\binom{k}{m}}}
    = \dfrac{n-m}{k-m}\cdot  \frac{a_{m+1}}{a_m}.
\end{align*}
We have the following:
\begin{theorem}
    Consider the equicorrelated normal set-up with correlation $\rho > 0$. Then, the sequence $r_m$ is increasing in $1\leq m\leq k-1$.
\end{theorem}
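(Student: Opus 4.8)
The plan is to establish the equivalent monotonicity statement $r_{m+1} \geq r_m$ for each $1 \leq m \leq k-2$. First I would form the ratio $r_{m+1}/r_m$ directly from the definition $r_m = \frac{n-m}{k-m}\cdot \frac{a_{m+1}}{a_m}$, which factors cleanly as
$$\frac{r_{m+1}}{r_m} = \frac{(n-m-1)(k-m)}{(k-m-1)(n-m)} \cdot \frac{a_{m+2}\,a_m}{a_{m+1}^2}.$$
The whole argument then reduces to showing that each of these two factors is at least $1$.

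The combinatorial factor is elementary. Expanding the numerator difference gives $(n-m-1)(k-m) - (n-m)(k-m-1) = n-k \geq 0$, and since $m \leq k-2 < k \leq n$ all four linear terms are strictly positive, so $\frac{(n-m-1)(k-m)}{(k-m-1)(n-m)} \geq 1$.

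The second factor is the log-convexity of the intersection-probability sequence, $a_{m+1}^2 \leq a_m a_{m+2}$, and this is where the equicorrelated structure enters. I would use the one-factor representation $X_i = \sqrt{\rho}\, Z + \sqrt{1-\rho}\, \varepsilon_i$ with $Z, \varepsilon_1, \ldots$ iid $N(0,1)$, which is real-valued precisely because $\rho > 0$. Conditioning on $Z$ makes the coordinates independent, so with $c = \Phi^{-1}(1 - k\alpha/n)$ and $p(z) = 1 - \Phi\!\big(\tfrac{c - \sqrt{\rho}\, z}{\sqrt{1-\rho}}\big)$ one obtains $a_m = \mathbb{E}\big[p(Z)^m\big]$. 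Thus $\{a_m\}$ is exactly the moment sequence of the bounded nonnegative variable $W := p(Z) \in (0,1)$. Applying Cauchy--Schwarz with $U = W^{m/2}$ and $V = W^{(m+2)/2}$ gives $a_{m+1}^2 = \mathbb{E}[W^{m+1}]^2 \leq \mathbb{E}[W^m]\,\mathbb{E}[W^{m+2}] = a_m a_{m+2}$, the desired log-convexity. Each $a_m > 0$ since $0 < p(z) < 1$, so all the divisions above are legitimate.

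Combining the two factors yields $r_{m+1}/r_m \geq 1$, which finishes the proof. I expect the only genuinely substantive step to be recognizing the moment structure: once the one-factor representation rewrites $a_m$ as $\mathbb{E}[W^m]$, the log-convexity is the classical moment inequality, and the combinatorial factor is routine. The hypothesis $\rho > 0$ serves exactly to make the mixing representation real; for $\rho = 0$ the conclusion is immediate since $a_m = p^m$ is already log-linear.
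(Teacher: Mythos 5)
Your proof is correct, and its top-level structure (monotone combinatorial factor times the log-convexity ratio $a_{m+2}a_m/a_{m+1}^2$) matches the paper's decomposition; but your proof of the key log-convexity inequality is genuinely different. The paper obtains $a_{m+1}a_{m-1}\geq a_m^2$ by invoking Tong's majorization theorem for extreme order statistics of block-equicorrelated Gaussian vectors, comparing the block-size vectors $(m+1,m-1,0,\ldots,0)$ and $(m,m,0,\ldots,0)$ and using independence across blocks to factor $P[X_{(1)}\geq\lambda]$ into products of the $a_j$'s. You instead use the one-factor representation $X_i=\sqrt{\rho}\,Z+\sqrt{1-\rho}\,\varepsilon_i$ to write $a_m=\mathbb{E}[W^m]$ with $W=p(Z)\in(0,1)$, so that log-convexity is just Cauchy--Schwarz applied to a moment sequence. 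Your route is more elementary and self-contained --- it avoids importing a nontrivial external theorem, makes transparent exactly where $\rho>0$ is used (reality of $\sqrt{\rho}$ in the mixing representation), and handles the boundary case $\rho=0$ for free --- while the paper's route via Tong generalizes more readily to settings where no exchangeable one-factor representation is available (e.g.\ comparisons across non-equal block structures). One small presentational difference: the paper asserts that the product of the two increasing positive sequences $(n-m)/(k-m)$ and $a_{m+1}/a_m$ is increasing, whereas you verify the equivalent ratio inequality $r_{m+1}/r_m\geq 1$ directly by expanding $(n-m-1)(k-m)-(n-m)(k-m-1)=n-k\geq 0$; both are valid.
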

\begin{proof}
    Consider the block equicorrelation structure as mentioned in Appendix. Suppose $\mathbf{k}=(m+1,m-1,0,\ldots,0)$ and $\mathbf{k}^{\star}=(m,m,0,\ldots,0)$, where $2m-2$ zeros are there in each of these two vectors. So $\mathbf{k}>\mathbf{k}^\star$. Applying Theorem \ref{Tong}, we obtain
    $$a_{m+1}a_{m-1}\geq a_{m}^2.$$
This means $a_{m+1}/a_m$ is increasing in $m$. Since, $(n-m)/(k-m)$ is also increasing in $m$, we have the desired result. \end{proof}

\begin{lemma}\label{equi1}
Consider the equicorrelated normal set-up with correlation $\rho > 0$. Suppose $k>1$ and $k/n, \alpha<.5$. Then, $r_1<1$.
\end{lemma}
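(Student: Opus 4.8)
The plan is to unwind the definition of $r_1$ and reduce the claim to a single sharp estimate on a bivariate Gaussian upper-orthant probability. By the formula for $r_m$ recorded just above,
$$r_1 = \frac{n-1}{k-1}\cdot\frac{a_2}{a_1},$$
so $r_1 < 1$ is equivalent to $a_2 < \frac{k-1}{n-1}\,a_1$. Under the global null each $X_i$ is standard normal, so writing $c := \Phi^{-1}(1-k\alpha/n)$ we have $a_1 = \mathbb{P}(X_1 > c) = k\alpha/n =: p$, while $a_2 = \mathbb{P}(X_1 > c,\, X_2 > c)$ is the upper orthant probability of a bivariate normal with correlation $\rho$. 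Everything thus reduces to proving $\mathbb{P}(X_1 > c, X_2 > c) < \frac{k-1}{n-1}\,p$.

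First I would replace the orthant probability by its exact integral form (Plackett's/Price's identity, the same representation already underlying the definitions of $f_{n,k,\Sigma_n}$ and $g_{n,k,\Sigma_n}$):
$$a_2 = p^2 + \frac{1}{2\pi}\int_0^{\rho}\frac{1}{\sqrt{1-z^2}}\,e^{-c^2/(1+z)}\,dz.$$
The target then splits into two pieces. For the leading term, the hypotheses $\alpha < 1/2$, $k/n < 1/2$ and $k \geq 2$ give $\alpha < \tfrac{n(k-1)}{k(n-1)}$, hence $p = k\alpha/n < \frac{k-1}{n-1}$ and therefore $p^2 < \frac{k-1}{n-1}\,p$. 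It remains to absorb the integral term, i.e. to show
$$\frac{1}{2\pi}\int_0^{\rho}\frac{1}{\sqrt{1-z^2}}\,e^{-c^2/(1+z)}\,dz \;<\; p\Bigl(\tfrac{k-1}{n-1} - p\Bigr).$$

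Estimating this integral is the heart of the matter and the step I expect to be the main obstacle. Since both factors of the integrand are increasing on $[0,\rho]$, a first bound is $\frac{\rho}{2\pi\sqrt{1-\rho^2}}\,e^{-c^2/(1+\rho)}$, after which I would compare $e^{-c^2/(1+\rho)}$ with $p$ through the Gaussian tail (Mills-ratio) inequalities $\frac{\phi(c)}{c}(1 - c^{-2}) \le p \le \frac{\phi(c)}{c}$, where $\phi$ denotes the standard normal density. The parameter constraints force $p < 1/4$ and hence $c > \Phi^{-1}(3/4) > 0$, which is what makes these comparisons usable. The delicate point is that $e^{-c^2/(1+\rho)}/p$ is comparable to $c\,e^{-c^2(1-\rho)/\{2(1+\rho)\}}$, so the integral is genuinely small relative to $p$ only when the exponent $c^2(1-\rho)/(1+\rho)$ is sizeable; keeping the resulting bound below the available slack $p(\frac{k-1}{n-1}-p)$ is precisely where the argument must exploit the hypotheses most carefully, and I would expect this comparison to be the part requiring the most work — and, quite possibly, some control on how large $\rho$ is permitted to be relative to $\alpha$ and $k/n$.
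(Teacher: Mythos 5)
Your reduction is exactly the paper's: you identify $r_1<1$ with $a_2<\frac{k-1}{n-1}a_1$, invoke the same Plackett--Price/Monhor representation $a_2=p^2+\frac{1}{2\pi}\int_0^{\rho}(1-z^2)^{-1/2}e^{-c^2/(1+z)}\,dz$ with $p=k\alpha/n$ and $c=\Phi^{-1}(1-p)$, dispose of the $p^2$ term via $p<\frac{k-1}{n-1}$, and arrive at the same residual target $\frac{1}{2\pi}\int_0^{\rho}(1-z^2)^{-1/2}e^{-c^2/(1+z)}\,dz<p\bigl(\frac{k-1}{n-1}-p\bigr)$. But at precisely this point you stop: you describe the Mills-ratio comparison you \emph{would} attempt, call it ``the main obstacle,'' and concede that it may require ``some control on how large $\rho$ is permitted to be relative to $\alpha$ and $k/n$.'' Since the lemma asserts the conclusion for \emph{every} $\rho>0$ under only $k>1$ and $k/n,\alpha<.5$, an argument that leaves open whether an extra restriction on $\rho$ is needed has not proved the statement. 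That is a genuine gap, not a presentational one.

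For the record, the paper closes this step more crudely than via Mills' ratio: it bounds the integrand by its supremum, giving $\frac{1}{2\pi}\int_0^{\rho}(1-z^2)^{-1/2}e^{-c^2/(1+z)}\,dz\le\frac{\sin^{-1}(\rho)}{2\pi}\,e^{-c^2/2}$ (using $1+z\le 2$ and $\int_0^\rho(1-z^2)^{-1/2}dz=\sin^{-1}(\rho)$), and then runs a short chain of sufficient conditions reducing the claim to $c>1/4$, i.e.\ $k\alpha/n<1-\Phi(1/4)$, which the hypotheses guarantee. Your instinct that comparing $e^{-c^2/2}$ (or $e^{-c^2/(1+\rho)}$) against the small quantity $p\bigl(\frac{k-1}{n-1}-p\bigr)$ is the delicate heart of the lemma is sound --- the paper's own sequence of ``$\impliedby$'' steps here is terse and deserves independent scrutiny --- but identifying the obstacle is not the same as overcoming it; as submitted, your argument does not establish $r_1<1$.
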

\noindent To prove this, we need the following representation theorem by \cite{Monhor_2013}:
\begin{lemma}\label{STPAlemma3.4}
 Suppose $(X,Y)$ follows a bivariate normal distribution with parameters $(0,0,1,1,\rho)$ with $\rho \geq 0$. Then, for all $x >0$, 
$$\mathbb{P}(X \leq x, Y \leq x)=[\Phi(x)]^2+\dfrac{1}{2\pi}\int_{0}^{\rho}\dfrac{1}{\sqrt{1-z^2}}e^{\frac{-x^2}{1+z}}dz.$$
\end{lemma}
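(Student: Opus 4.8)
The plan is to view the left-hand side as a function of the correlation $\rho$ and identify its $\rho$-derivative, thereby reducing the claimed identity to an elementary one-variable integration whose boundary value at $\rho=0$ is supplied by independence. Fix $x>0$ and denote by
$$\varphi_\rho(u,v) = \frac{1}{2\pi\sqrt{1-\rho^2}}\exp\left(-\frac{u^2 - 2\rho uv + v^2}{2(1-\rho^2)}\right)$$
the bivariate standard normal density with correlation $\rho$, and set $H(\rho) := \mathbb{P}(X \le x, Y \le x) = \int_{-\infty}^x\!\int_{-\infty}^x \varphi_\rho(u,v)\,dv\,du$. The cornerstone is Plackett's identity
$$\frac{\partial}{\partial \rho}\,\varphi_\rho(u,v) = \frac{\partial^2}{\partial u\,\partial v}\,\varphi_\rho(u,v),$$
which I would verify either by direct differentiation of the Gaussian kernel or, more cleanly, by passing to the characteristic function $\exp\{-\tfrac12(s^2 + 2\rho s t + t^2)\}$, where $\partial_\rho$ and $\partial_u\partial_v$ both act as multiplication by $-st$.

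Granting the identity, I would differentiate $H$ under the integral sign and then integrate out using the fundamental theorem of calculus in each variable:
$$H'(\rho) = \int_{-\infty}^x\!\int_{-\infty}^x \frac{\partial^2 \varphi_\rho}{\partial u\,\partial v}\,dv\,du = \varphi_\rho(x,x),$$
the boundary contributions at $-\infty$ vanishing since $\varphi_\rho$ and its first partial derivatives decay to zero. Specializing the exponent along the diagonal $u=v=x$ gives
$$\varphi_\rho(x,x) = \frac{1}{2\pi\sqrt{1-\rho^2}}\exp\left(-\frac{2x^2(1-\rho)}{2(1-\rho^2)}\right) = \frac{1}{2\pi\sqrt{1-\rho^2}}\,e^{-x^2/(1+\rho)},$$
after cancelling $(1-\rho)$ against $(1-\rho^2)=(1-\rho)(1+\rho)$.

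Finally, since $X$ and $Y$ are independent when $\rho=0$, the boundary value is $H(0)=[\Phi(x)]^2$. Integrating $H'(z)=\tfrac{1}{2\pi\sqrt{1-z^2}}e^{-x^2/(1+z)}$ from $0$ to $\rho$ then produces exactly the asserted representation. The main obstacle is the rigorous justification of the two analytic interchanges, namely differentiating under the integral with respect to $\rho$ and establishing Plackett's identity together with the vanishing of the boundary terms at $-\infty$; each is routine but needs a dominated-convergence or decay argument, and the characteristic-function route is the most economical way to secure the underlying partial differential equation itself.
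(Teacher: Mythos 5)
Your proof is correct, but there is nothing in the paper to compare it against: the paper does not prove this lemma at all. It imports the statement as a representation theorem of \cite{Monhor_2013}, and the proof printed immediately below it in the source is the proof of the preceding lemma (the claim $r_1<1$), not of this one. Your argument is the classical Plackett-type derivation, and every step checks out: the identity $\partial_\rho \varphi_\rho(u,v)=\partial_u\partial_v\varphi_\rho(u,v)$ is genuine (your characteristic-function verification is the cleanest route, since $\partial_\rho$ and $\partial_u\partial_v$ both act on $\exp\{-\tfrac12(s^2+2\rho st+t^2)\}$ as multiplication by $-st$); differentiating $H(\rho)=\int_{-\infty}^{x}\int_{-\infty}^{x}\varphi_\rho\,dv\,du$ under the integral sign is justified by dominated convergence for $\rho$ in a compact subinterval of $[0,1)$; the double integral collapses by the fundamental theorem of calculus to $\varphi_\rho(x,x)$ because $\varphi_\rho$ and $\partial_u\varphi_\rho$ decay to zero at $-\infty$; the diagonal evaluation $\varphi_\rho(x,x)=\frac{1}{2\pi\sqrt{1-\rho^2}}e^{-x^2/(1+\rho)}$ is correct after cancelling $(1-\rho)$ against $(1-\rho^2)$; and the boundary value $H(0)=[\Phi(x)]^2$ from independence closes the argument. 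What your approach buys is a short, self-contained proof of a result the paper merely cites, making the paper's Section 5 logically complete without an external reference; the only cosmetic remark is that if the hypothesis $\rho\ge 0$ is meant to include $\rho=1$, the degenerate case follows from your formula by continuity, since the integrand has an integrable singularity at $z=1$.
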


\begin{proof}
We start with finding an expression on $a_2$:
\begin{align*}
    a_2 & = \mathbb{P}_{H_{0}}(A_{i} \cap A_{j})\\
    &=1 - \mathbb{P}_{H_{0}}(A_{i}^{c} \cup A_{j}^{c})\\
    &=1 - \mathbb{P}_{H_{0}}(A_{i}^{c}) - \mathbb{P}_{H_0}(A_{j}^{c}) + \mathbb{P}_{H_0}(A_{i}^{c} \cap A_{j}^{c})\\
    &=1-(1-k\alpha/n)-(1-k\alpha/n)+\mathbb{P}_{H_{0}}\bigg(X_i \leq \Phi ^{-1}(1-k\alpha/n), X_j \leq \Phi ^{-1}(1-k\alpha/n)\bigg)\\
    &=\frac{2k\alpha}{n}-1+(1-k\alpha/n)^2 +\dfrac{1}{2\pi}\int_{0}^{\rho_{ij}}\dfrac{1}{\sqrt{1-z^2}}e^{\frac{-\{\Phi ^{-1}(1-\frac{k\alpha}{n})\}\color{black}{}^2}{1+z}}dz \quad \text{(using Lemma \ref{STPAlemma3.4})}\\
    &=\frac{k^{2} \alpha^2}{n^2}+\dfrac{1}{2\pi}\int_{0}^{\rho}\dfrac{1}{\sqrt{1-z^2}}e^{\frac{-\{\Phi ^{-1}(1-\frac{k\alpha}{n})\}^2}{1+z}}dz.
\end{align*}
Hence, 
    \begin{align*}
        r_1<1 & \iff a_2 < \frac{k-1}{n-1}\cdot a_1\\
        & \iff \left(\frac{k\alpha}{n}\right)^2 + \dfrac{1}{2\pi}\int_{0}^{\rho}\dfrac{1}{\sqrt{1-z^2}}e^{\frac{-\{\Phi ^{-1}(1-\frac{k\alpha}{n})\}^2}{1+z}}dz < \frac{k-1}{n-1}\cdot \frac{k\alpha}{n} \\
        &\iff \dfrac{1}{2\pi}\int_{0}^{\rho}\dfrac{1}{\sqrt{1-z^2}}e^{\frac{-\{\Phi ^{-1}(1-\frac{k\alpha}{n})\}^2}{1+z}}dz < \frac{k\alpha}{n} \bigg[\frac{k-1}{n-1}- \frac{k\alpha}{n}\Bigg] \\
        & \impliedby \frac{\sin^{-1}(\rho)}{2\pi} \cdot e^{-\frac{c^2}{2}} < \frac{k\alpha}{n} \bigg[\frac{k-1}{n-1}- \frac{k\alpha}{n}\Bigg] \quad \text{(where $c=\Phi ^{-1}(1-k\alpha/n)$)} \\
        & \impliedby \frac{1}{2} \frac{k\alpha}{n} \bigg[\frac{k-1}{n-1}- \frac{k\alpha}{n}\Bigg] < c^2 \\
        & \impliedby 1/4<c \quad \text{(since $k/n, \alpha<.5$)} \\
        & \impliedby k\alpha/n < 1-\Phi(1/4)
    \end{align*}
which is true since since $k/n, \alpha<.5$.
\end{proof}
Thus, $r_{m}$ is increasing in $m$ and $r_1<1$. Hence, if there exists $m^{\star}$ such that $r_{m^{\star}-1}<1<r_{m^{\star}}$ then 
$$\min_{1\leq m \leq k} \frac{S_m}{\binom{k}{m}} = \frac{S_{m^{\star}}}{\binom{k}{m^{\star}}}.$$
\section{Results under Nearly Independence\label{sec:6}}
The correlation between two single nucleotide polymorphisms (SNP) is thought \citep{Proschan} to decrease with genomic distance. Many authors have argued that for the large numbers of markers typically used for a GWA study, the test statistics are weakly correlated because of this largely local presence of correlation between SNPs. \cite{StoreyTib2003} define weak dependence as “any form of dependence whose effect becomes negligible as the number of features increases to infinity” and remark that weak dependence generally holds in genome-wide scans. \cite{DasBhandari2025} consider the \textit{nearly independent setup}: 

$$\forall i \neq j, \quad
\operatorname{Corr}\left(X_i, X_j\right) =O\left(\frac{1}{n^\beta}\right) = \rho_{i j} \quad \text{(say)} \quad  \text { for some } \beta>0.
$$

\noindent Under the global null, we have
$$
\begin{aligned}
 k\text{-FWER}_{modified}\left(n, \alpha, \Sigma_{n}\right) & \leqslant \binom{n}{k} \cdot \max_{(i_1, \ldots, i_k)}\mathbb{P}\left[\bigcap_{j=1}^{k}\left\{X_{i_j}>c_{k, \alpha^{\star}, n}\right\}\right] 
\end{aligned}
$$
where $c_{k, \alpha^{\star}, n}=\Phi^{-1}(1-k\alpha^{\star}/n)$. 

Proceeding along the similar lines of \cite{DasBhandari2025} (see Lemma 4.3 therein), one obtains the following: 

$$\mathbb{P}\left[\displaystyle\bigcap_{j=1}^{k}\left\{X_{i_j}>c_{k, \alpha^{\star}, n}\right\}\right] \sim \displaystyle \frac{f(c_{k, \alpha^{\star}, n}, {\Sigma_{n}}^{-1})}{\prod_{i=1}^k \Delta_i} \quad \text{as $n \to \infty$}$$ provided that $c \to \infty$ as $n \to \infty$. Here $\Delta_{i} = \frac{c_{k, \alpha^{\star}, n}}{1+(k-1)\rho}$ for each $i$. 

We also have the following asymptotic approximation
$$\displaystyle \frac{f(c_{k, \alpha^{\star}, n}, {\Sigma_{n}}^{-1})}{\prod_{i=1}^k \Delta_i} \sim \left(\frac{k\alpha^{\star}}{n}\right)^k\left(1+\frac{c^2}{2} \displaystyle\sum_{l \neq m \in\left\{i_1, \ldots, i_k\right\}} \rho_{l m}\right) \quad \text{as $n \to \infty$.}$$

Combining these two approximations gives that under the global null, for all sufficiently large values of $n$,
$$
\begin{aligned}
 k\text{-FWER}_{modified}\left(n, \alpha, \Sigma_{n}\right) & \leqslant \binom{n}{k} \left(\frac{k\alpha^{\star}}{n}\right)^k \cdot \max_{(i_1, \ldots, i_k)}\left(1+\frac{c^2}{2} \displaystyle\sum_{l \neq m \in\left\{i_1, \ldots, i_k\right\}} \rho_{l m}\right).
\end{aligned}
$$
\section{Empirical Study\label{sec:7} and Real Data Analysis}
The set-up of our empirical study is as follows: 
\begin{itemize}
    \item number of hypotheses: $n=1000$. 
    \item choice of $k$: $k=25,50,75$. 
    \item choice of equicorrelation: $\rho \in \{.1, .15, .2, .25, .3\}$. 
    \item desired level of control : $\alpha=.05$. 
       \end{itemize}

       We present the empirical results for $k$-FWER for $n =
 1000, \alpha = .05$ and the aforementioned choices of $(n, k, \rho, \alpha)$ along with the existing bounds by \cite{deybhandaristpa} and our proposed bounds (given by Corollary \ref{corollary1}) in Table \ref{tab1}. It is noteworthy that for very small values of $\rho$, our proposed bounds are
 significantly smaller than the existing bounds, while the proposed bounds are never worse than the existing ones.
 
\begin{table}[ht]
\begin{center}
\caption{Estimates of $k$-FWER($n=1000,\alpha=.05,\rho$)}
\label{tab1}
\begin{tabular}{  c c c c c c c } 
 \hline
 $k$  & $\rho$ & 0.1 & 0.15 & 0.2 & 0.25 & 0.3 \\ \hline 
 25  & $\hat{k\text{-FWER}(\rho)}$ & 1e-04 & 0.0013 & 0.0024 & 0.0050 & 0.0062 \\ 
 & Existing  Bound  & 0.007098 & 0.011068 & 0.01672 & 0.02456 & 0.03522 \\ 
  & Proposed Bound  & 0.001392 & 0.006829 & 0.01672 & 0.02456 & 0.03522 \\ \hline
 50  & $\hat{k\text{-FWER}(\rho)}$  & 0e+00 & 0.0006 & 0.0017 & 0.0032 & 0.0048 \\
   & Existing Bound & 0.006185 & 0.009164& 0.01321&0.01857 &0.02557  \\
   & Proposed Bound  & 0.000447 &0.003484 & 0.00962 & 0.01857 &0.02557 \\ \hline
 75  & $\hat{k\text{-FWER}(\rho)}$  & 0e+00 & 0.0003 & 0.0013 & 0.0025 & 0.0037  \\
   & Existing Bound & 0.005739 & 0.008254 & 0.01157 &0.01587 &0.02134  \\ 
   & Proposed Bound  & 0.000191 & 0.002094 & 0.00681 & 0.01374 & 0.02134 \\ \hline
\end{tabular}
\end{center}
\end{table}

We now elucidate the practical utility of our proposed bounds through a \textit{prostate cancer dataset} \citep{Singh2002}. This dataset involves expression levels of $n=6033$ genes for $N=102$ individuals: among them 52 are prostate cancer patients and the rest are healthy persons.  

Let $T_{ij}$ denote the expression level for gene $i$ on individual $j$. The dataset is a $n \times N$ matrix $\mathbf{T}$ with $1\leq j \leq50$ for the healthy persons and $51\leq j \leq102$ for the cancer patients. Let $\bar{T}_{i}(1)$ and $\bar{T}_{i}(2)$ be the averages of $T_{i j}$ for these two groups respectively. one of the foremost problem is to to identify genes that have significantly different levels among the two populations \citep{Efron2010book}. In other words, one is interested in testing
\begin{align*}
H_{0 i} \text { : } & T_{i j} \text{\hspace{1.5mm}has the same distribution for the two populations}. 
\end{align*}

A natural way to test $H_{0i}$ is to compute the usual $t$ statistic $t_{i}=\frac{\bar{T}_{i}(2)-\bar{T}_{i}(1)}{S_{i}}$. Here, 
$$
S_{i}^{2}=\displaystyle \frac{\displaystyle \sum_{j=1}^{50}\left(T_{i j}-\bar{T}_{i}(1)\right)^{2}+\sum_{j=51}^{102}\left(T_{i j}-\bar{T}_{i}(2)\right)^{2}}{100} \cdot\left(\frac{1}{50}+\frac{1}{52}\right).
$$

One rejects $H_{0i}$ at $\alpha=.05$ (based on usual normality assumptions) if $\left|t_{i}\right|$ exceeds 1.98, i.e, the two-tailed 5\% point for a Student-$t$ random variable with 100 d.f. We consider the following transformation: 
$$X_{i}=\Phi^{-1}\left(F\left(t_{i}\right)\right),$$
\noindent where $F$ denotes the cdf of $t_{100}$ distribution. This means
$$H_{0 i}: X_{i} \sim N(0,1).$$
The observed value of the usual correlation coefficient efficient between the $n$ $X$ values is less than $.001$. Hence, we take $\rho=0$ in our computations.

For a given $k$, the Lehmann-Romano procedure at level $\alpha=.05$ rejects $H_{0i}$ if $X_i > \Phi^{-1}(1-k\alpha/n)$. Our proposed procedure rejects $H_{0i}$ if $X_i> \Phi^{-1}(1-k\alpha^{\star}/n)$ where $\alpha^{\star}=\left[\frac{n}{k \cdot\binom{n}{k}^{1 / k}}\right] \alpha^{1 / k}$.  The numbers of rejected hypotheses for different values of $k$ by Lehmann-Romano procedure, the method proposed in \cite{deybhandaristpa}, and our proposed procedure are given in Table \ref{tab0}.

\begin{table}[ht]
\begin{center}
\caption{Number of rejected hypotheses for different values of $k$}
\label{tab0}
\begin{tabular}{  c c c c c c c c c c c} 
 \hline
 $k$  & $2$ & 10 &20 &40 &60 \\ \hline 
 Lehmann-Romano Method  & 6 & 13 &17 &26 &27 \\ \hline
 Method in \cite{deybhandaristpa} & 12  & 26 &34 &51 &62 \\  \hline
 Proposed Method  & 12  & 28  &47 &63 &78 \\  \hline
\end{tabular}
\end{center}
\end{table}
Table \ref{tab0} demonstrates that our proposed method rejects significantly more number of hypotheses than the method proposed in \cite{deybhandaristpa} and the Lehmann-Romano method for each value of $k$. This illustrates the superiority of our proposed bound on the probability of occurrence of at least $k$ among $n$ events.

\section{Concluding Remarks\label{sec:8}}

Computing generalized familywise error rates involves the knowledge of joint distribution of test statistics under null hypotheses. While this distribution is relatively straightforward under independence, this becomes intractable under dependence, especially arbitrary or unknown dependence. While dependence is a natural phenomenon in a plethora of scientific avenues, the nature and extent of dependence varies with the context and complexity of the particular scenario. 
This paper revisits the classical testing problem of normal means in different correlated frameworks. We establish upper bounds on the generalized familywise error rates under each dependence, consequently giving rise to improved testing procedures. Towards this, we also present improved inequalities on the
 probability that at least $k$ among $n$ events occur, which are of independent theoretical interest. This probability arises, e.g., in transportation and communication networks. The new probability inequalities proposed in this work might be insightful
 in those areas, too.

\section*{Acknowledgements}
Dey is grateful to Prof. Thorsten Dickhaus for his constant encouragement and support during this work.

\appendix

\bibliography{references}

\begin{thebibliography}{15}
\providecommand{\natexlab}[1]{#1}
\providecommand{\url}[1]{\texttt{#1}}
\expandafter\ifx\csname urlstyle\endcsname\relax
  \providecommand{\doi}[1]{doi: #1}\else
  \providecommand{\doi}{doi: \begingroup \urlstyle{rm}\Url}\fi

\bibitem[Chi et~al.(2025)Chi, Ramdas, and Wang]{ChiRamdasWang}
Z.~Chi, A.~Ramdas, and R.~Wang.
\newblock {Multiple testing under negative dependence}.
\newblock \emph{Bernoulli}, 31\penalty0 (2):\penalty0 1230 -- 1255, 2025.
\newblock \doi{10.3150/24-BEJ1768}.
\newblock URL \url{https://doi.org/10.3150/24-BEJ1768}.

\bibitem[Das and Bhandari(2025)]{DasBhandari2025}
N.~Das and S.~K. Bhandari.
\newblock Fwer for normal distribution in nearly independent setup.
\newblock \emph{Statistics \& Probability Letters}, 219:\penalty0 110340, 2025.
\newblock ISSN 0167-7152.
\newblock \doi{https://doi.org/10.1016/j.spl.2024.110340}.
\newblock URL \url{https://www.sciencedirect.com/science/article/pii/S0167715224003092}.

\bibitem[Delattre and Roquain(2011)]{Delattre}
S.~Delattre and E.~Roquain.
\newblock On the false discovery proportion convergence under gaussian equi-correlation.
\newblock \emph{Statistics \& Probability Letters}, 81\penalty0 (1):\penalty0 111--115, 2011.
\newblock URL \url{https://www.sciencedirect.com/science/article/pii/S0167715210002750}.

\bibitem[Dey(2024)]{deycstm}
M.~Dey.
\newblock Behavior of fwer in normal distributions.
\newblock \emph{Communications in Statistics - Theory and Methods}, 53\penalty0 (9):\penalty0 3211--3225, 2024.
\newblock URL \url{https://doi.org/10.1080/03610926.2022.2150826}.

\bibitem[Dey and Bhandari(2024)]{deybhandaristpa}
M.~Dey and S.~K. Bhandari.
\newblock Bounds on generalized family-wise error rates for normal distributions.
\newblock \emph{Statistical Papers}, 65:\penalty0 2313–2326, 2024.
\newblock URL \url{https://doi.org/10.1007/s00362-023-01487-0}.

\bibitem[Dey and Bhandari(2023)]{deybhandari}
M.~Dey and S.K. Bhandari.
\newblock Fwer goes to zero for correlated normal.
\newblock \emph{Statistics \& Probability Letters}, 193:\penalty0 109700, 2023.
\newblock URL \url{https://www.sciencedirect.com/science/article/pii/S0167715222002139}.

\bibitem[Efron(2010)]{Efron2010book}
B.~Efron.
\newblock \emph{Large-Scale Inference: Empirical Bayes Methods for Estimation, Testing, and Prediction}.
\newblock Institute of Mathematical Statistics Monographs. Cambridge University Press, 2010.
\newblock URL \url{https://doi.org/10.1017/CBO9780511761362}.

\bibitem[Joag-Dev and Proschan(1983)]{Joag-Dev}
K.~Joag-Dev and F.~Proschan.
\newblock {Negative Association of Random Variables with Applications}.
\newblock \emph{The Annals of Statistics}, 11\penalty0 (1):\penalty0 286 -- 295, 1983.
\newblock \doi{10.1214/aos/1176346079}.
\newblock URL \url{https://doi.org/10.1214/aos/1176346079}.

\bibitem[Lehmann and Romano(2005)]{Lehmann2005}
E.~L. Lehmann and Joseph~P. Romano.
\newblock {Generalizations of the familywise error rate}.
\newblock \emph{The Annals of Statistics}, 33\penalty0 (3):\penalty0 1138 -- 1154, 2005.
\newblock URL \url{https://doi.org/10.1214/009053605000000084}.

\bibitem[Monhor(2013)]{Monhor_2013}
D.~Monhor.
\newblock Inequalities for correlated bivariate normal distribution function.
\newblock \emph{Probability in the Engineering and Informational Sciences}, 27\penalty0 (1):\penalty0 115–123, 2013.
\newblock URL \url{https://doi.org/10.1017/S0269964812000332}.

\bibitem[Proschan and Shaw(2011)]{Proschan}
M.~A. Proschan and P.~A. Shaw.
\newblock Asymptotics of {B}onferroni for dependent normal test statistics.
\newblock \emph{Statistics \& Probability Letters}, 81\penalty0 (7):\penalty0 739--748, 2011.
\newblock URL \url{https://www.sciencedirect.com/science/article/pii/S0167715210003329}.
\newblock Statistics in Biological and Medical Sciences.

\bibitem[Roy and Bhandari(2024)]{royspl}
R.~Roy and S.~K. Bhandari.
\newblock Asymptotic bayes’ optimality under sparsity for exchangeable dependent multivariate normal test statistics.
\newblock \emph{Statistics \& Probability Letters}, 207:\penalty0 110030, 2024.
\newblock URL \url{https://www.sciencedirect.com/science/article/pii/S0167715223002535}.

\bibitem[Singh et~al.(2002)Singh, Febbo, Ross, Jackson, Manola, Ladd, Tamayo, Renshaw, D'Amico, Richie, Lander, Loda, Kantoff, Golub, and Sellers]{Singh2002}
D.~Singh, P.~G. Febbo, K.~Ross, D.~G. Jackson, J.~Manola, C.~Ladd, P.~Tamayo, A.~A. Renshaw, A.~V. D'Amico, J.~P. Richie, E.~S. Lander, M.~Loda, P.~W. Kantoff, T.~R. Golub, and W.~R. Sellers.
\newblock Gene expression correlates of clinical prostate cancer behavior.
\newblock \emph{Cancer cell}, 1\penalty0 (2):\penalty0 203—209, March 2002.
\newblock URL \url{https://doi.org/10.1016/s1535-6108(02)00030-2}.

\bibitem[Storey and Tibshirani(2003)]{StoreyTib2003}
J.~D. Storey and R.~Tibshirani.
\newblock Statistical significance for genomewide studies.
\newblock \emph{Proceedings of the National Academy of Sciences}, 100\penalty0 (16):\penalty0 9440--9445, 2003.
\newblock \doi{10.1073/pnas.1530509100}.
\newblock URL \url{https://www.pnas.org/doi/abs/10.1073/pnas.1530509100}.

\bibitem[Tong(1990)]{Tong}
Y.~L. Tong.
\newblock \emph{The Multivariate Normal Distribution}.
\newblock Springer Series in Statistics. Springer-Verlag, New York, 1990.

\end{thebibliography}

\section{Appendix}
Let $k_1, \ldots, k_n$ be nonnegative integers such that $\sum_{i=1}^n k_i=n$, and denote $\mathbf{k}=\left(k_1, \ldots, k_n\right)^{\prime}$. Without loss of generality, it may be assumed that
$$
k_1 \geq \cdots \geq k_r>0, \quad k_{r+1}=\cdots=k_n=0
$$
for some $r \leq n$. \cite{Tong} defines $r$ square matrices $\boldsymbol{\Sigma}_{11}, \ldots, \boldsymbol{\Sigma}_{rr}$ such that $\mathbf{\Sigma}_{jj}(k)$ has 1 on diagonals and $\rho$ on each each off-diagonals for $j=1, \ldots, r$. We define an $n \times n$ matrix $\mathbf{\Sigma}(\mathbf{k})$ given by
$$
\mathbf{\Sigma}(\mathbf{k})=\left(\begin{array}{cccc}
\boldsymbol{\Sigma}_{11} & \mathbf{0} & \cdots & \mathbf{0} \\
\mathbf{0} & \boldsymbol{\Sigma}_{22} & \cdots & \mathbf{0} \\
& \cdots & & \\
\mathbf{0} & \mathbf{0} & \cdots & \boldsymbol{\Sigma}_{r r}
\end{array}\right).
$$

Let $\mathbf{X}=\left(X_1, \ldots, X_n\right)^{\prime}$ have an $N_n(\mu, \mathbf{\Sigma}(\mathbf{k}))$ distribution. For fixed block sizes $k_1, \ldots, k_r, X_i$ and $X_j$ are correlated with a common correlation coefficient $\rho$ if they are in the same block, and are independent if they belong to different blocks. The problem of interest is how the distributions and moments of the extreme order statistics $X_{(1)}$ and $X_{(n)}$ depend on the block size vector $\mathbf{k}$.

Let $\mathbf{k}^\star=\left(k_1^\star, \ldots, k_n^\star\right)^{\prime}$ denote another vector of nonnegative integers such that
$$
\quad k_1^\star \geq \cdots \geq k_r^\star>0, \quad k_{r+1}^\star=\cdots=k_n^\star=0
$$
and $\sum_{i=1}^{r^\star} k_i^\star=n$. Let $\mathbf{\Sigma}\left(\mathbf{k}^\star\right)$ denote the similar matrix as $\mathbf{\Sigma}(\mathbf{k})$ but based on $\mathbf{k}^\star$ instead of $\mathbf{k}$. 

One has the following:

\begin{theorem}(see Theorem 6.4.5 of \cite{Tong})\label{Tong}
    Let $\left(X_1, \ldots, X_n\right)^{\prime}$ have an $N_n(\mu, \mathbf{\Sigma}\left(\mathbf{k}\right))$ distribution, let $\left(X_1^\star, \ldots, X_n^\star\right)^{\prime}$ have an $N_n\left(\mu, \mathbf{\Sigma}\left(\mathbf{k}^\star\right)\right)$ distribution, and let $$
X_{(1)} \leq \cdots \leq X_{(n)}, \quad X_{(1)}^\star \leq \cdots \leq X_{(n)}^\star
$$
denote the corresponding order statistics. If $\mu_1=\cdots=\mu_n$ and $\mathbf{k}>\mathbf{k}^\star$, then:

$$\quad P\left[X_{(1)} \geq \lambda\right] \geq P\left[X_{(1)}^\star \geq \lambda\right] \quad \forall\lambda.$$
\end{theorem}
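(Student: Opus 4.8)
The plan is to turn the statement into a log-convexity property of a moment sequence. Because $\boldsymbol{\Sigma}(\mathbf{k})$ is block diagonal, the $r$ blocks of $\mathbf{X}$ are mutually independent, and since $X_{(1)}=\min_i X_i$ the event $\{X_{(1)}\geq\lambda\}$ is simply $\bigcap_i\{X_i\geq\lambda\}$, which factorizes across blocks. Writing $\psi_m(\lambda)$ for the probability that every coordinate of a single equicorrelated block of size $m$ (common mean $\mu_0$, common correlation $\rho$) exceeds $\lambda$, I would first record
$$\mathbb{P}[X_{(1)}\geq\lambda]=\prod_{j=1}^{r}\mathbb{P}\Big[\bigcap_{i\in\text{block }j}\{X_i\geq\lambda\}\Big]=\prod_{j=1}^{r}\psi_{k_j}(\lambda),$$
and the identical formula for $\mathbf{k}^{\star}$. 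Thus it suffices to show that $\mathbf{k}\mapsto\prod_j\psi_{k_j}(\lambda)$ is nondecreasing in the majorization order, so that $\mathbf{k}>\mathbf{k}^{\star}$ forces $\prod_j\psi_{k_j}\geq\prod_j\psi_{k_j^{\star}}$.

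The key step is a moment representation of $\psi_m$. For $\rho\geq 0$ I would invoke the one-factor (conditionally i.i.d.) decomposition $X_i=\mu_0+\sqrt{\rho}\,Z+\sqrt{1-\rho}\,\varepsilon_i$ with $Z,\varepsilon_1,\ldots$ independent standard normals. Conditionally on $Z$ the coordinates within a block are i.i.d., whence
$$\psi_m(\lambda)=\mathbb{E}_Z\big[g(Z)^m\big],\qquad g(z):=\Phi\!\left(\frac{\mu_0+\sqrt{\rho}\,z-\lambda}{\sqrt{1-\rho}}\right)\in[0,1].$$
So $\psi_m(\lambda)$ is exactly the $m$-th moment of the $[0,1]$-valued random variable $W:=g(Z)$. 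Moment sequences of nonnegative variables are log-convex: Cauchy--Schwarz (Lyapunov's inequality) gives $\mathbb{E}[W^m]^2\leq\mathbb{E}[W^{m-1}]\,\mathbb{E}[W^{m+1}]$, i.e. $\psi_m^2\leq\psi_{m-1}\psi_{m+1}$, equivalently the ratio $\psi_{m+1}/\psi_m$ is nondecreasing in $m$.

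I would then close the argument by the standard elementary-transfer reduction for majorization. Since $\mathbf{k}>\mathbf{k}^{\star}$, the vector $\mathbf{k}^{\star}$ is reachable from $\mathbf{k}$ by a finite sequence of Robin--Hood transfers, each replacing a pair $(k_i,k_j)$ with $k_i>k_j$ by $(k_i-1,k_j+1)$. Under one such transfer the product $\prod_j\psi_{k_j}$ is multiplied by $\dfrac{\psi_{k_i-1}\psi_{k_j+1}}{\psi_{k_i}\psi_{k_j}}=\Big(\psi_{k_j+1}/\psi_{k_j}\Big)\big/\Big(\psi_{k_i}/\psi_{k_i-1}\Big)\leq 1$, the inequality holding precisely because $k_i-1\geq k_j$ and $\psi_{m+1}/\psi_m$ is nondecreasing. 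Hence each transfer can only decrease the product, so $\prod_j\psi_{k_j}\geq\prod_j\psi_{k_j^{\star}}$, which is the claim.

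The one genuinely delicate ingredient is the moment representation, which relies on $\rho\geq 0$ so that the factor model is real and valid; in the application $\rho>0$, so this is not restrictive. The remaining pieces—the block factorization, the Cauchy--Schwarz log-convexity, and the transfer bookkeeping—are routine, so I expect no further obstacle once the representation $\psi_m=\mathbb{E}[W^m]$ is in place.
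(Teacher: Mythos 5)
Your proof is correct. Note that the paper itself offers no proof of this statement: it is imported verbatim as Theorem 6.4.5 of \cite{Tong}, so there is nothing internal to compare against; your argument is essentially the standard one behind Tong's result (a Schur-convexity statement for the block-size vector). The three ingredients all check out: the block factorization $\mathbb{P}[X_{(1)}\geq\lambda]=\prod_j\psi_{k_j}(\lambda)$ is immediate from independence of blocks; the one-factor representation gives $\psi_m(\lambda)=\mathbb{E}[W^m]$ with $W=\Phi\bigl((\mu_0+\sqrt{\rho}Z-\lambda)/\sqrt{1-\rho}\bigr)\in(0,1)$, valid precisely because $\rho\geq 0$ (which the paper's application, $\rho>0$, satisfies); and Cauchy--Schwarz then yields $\psi_{m-1}\psi_{m+1}\geq\psi_m^2$, so each unit (Robin--Hood) transfer taking $\mathbf{k}$ down to $\mathbf{k}^{\star}$ multiplies the product by a factor at most one. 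Two small points worth making explicit: you need the convention $\psi_0\equiv 1$ so that transfers which activate a previously empty block are covered by the same bookkeeping, and the positivity $\psi_m>0$ (true since $W>0$ a.s.) so that the ratio $\psi_{m+1}/\psi_m$ is well defined. It is also worth observing that the only instance of the theorem the paper actually uses --- $\mathbf{k}=(m+1,m-1,0,\ldots,0)$ versus $\mathbf{k}^{\star}=(m,m,0,\ldots,0)$, giving $a_{m+1}a_{m-1}\geq a_m^2$ --- is delivered directly by your Cauchy--Schwarz step without any majorization machinery, so for the purposes of this paper your moment representation alone already suffices.
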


\end{document}